\documentclass[11pt]{amsart}

\usepackage[a4paper,margin=25mm,top=30mm]{geometry}

\usepackage{amsfonts, amsmath, amssymb, amsgen, amsthm, amscd}
\usepackage{newtxtext,newtxmath}
\usepackage[utf8]{inputenc} 

\usepackage{color}

\usepackage[all]{xy}

\usepackage{hyperref}
\usepackage{mathtools,slashed}

\setlength{\parindent}{0cm}
\setlength{\parskip}{3mm}

\usepackage{enumerate}

\def\a{\alpha}
\def\b{\beta}
\def\d{\delta}

\def\p{\partial}
\def\lra{\longrightarrow}

\def\ot{\otimes}
\def\odots{\ot\cdots\ot}

\def\lra{\longrightarrow}

\def\rt{\triangleright}
\def\lt{\triangleleft}

\def\lbiprod{{>\!\!\!\triangleleft\kern-.33em\cdot\, }}
\def\rbiprod{{\cdot\kern-.33em\triangleright\!\!\!<}}

\newcommand{\ps}[1]{~\hspace{-4pt}_{^{(#1)}}}

\usepackage{enumerate}

\newcommand{\G}[1]{\mathfrak{#1}}
\newcommand{\C}[1]{\mathcal{#1}}
\newcommand{\B}[1]{\mathbb{#1}}

\newcommand{\xra}[1]{\xrightarrow{\ #1\ }}

\newcommand{\CB}{{\rm CB}}
\newcommand{\CH}{{\rm CH}}

\newcommand{\Hom}{{\rm Hom}}

\newcommand{\tor}{{\rm Tor}}

\newcommand{\ie}{{\it i.e.\/}\ }

\newcommand{\tr}{\triangleright}
\newcommand{\tl}{\triangleleft}

\renewcommand{\leq}{\leqslant}
\renewcommand{\geq}{\geqslant}

\numberwithin{equation}{section}

\newtheorem{theorem}{Theorem}[section]
\newtheorem{proposition}[theorem]{Proposition}
\newtheorem{lemma}[theorem]{Lemma}
\newtheorem{corollary}[theorem]{Corollary}
\theoremstyle{definition}

\newtheorem{remark}[theorem]{Remark}

%\def \vD{\uparrow\hspace{-4pt}d}

%%% Local Variables:
%%% mode: plain-tex
%%% TeX-master: t
%%% End:

\title{On the Hochschild homology of smash biproducts}

\author{A. Kaygun}
\address{Istanbul Technical University, Istanbul, Turkey}
\email{kaygun@itu.edu.tr}

\author{S. Sütlü}
\address{Işık University, Istanbul, Turkey}
\email{serkan.sutlu@isikun.edu.tr}

\begin{document}

\begin{abstract}
  We develop a new spectral sequence in order to calculate Hochschild homology of smash biproducts
  (also called twisted tensor products) of unital associative algebras $A\# B$ provided one
  of $A$ or $B$ has Hochschild dimension less than 2.  We use this spectral sequence to
  calculate Hochschild homology of quantum tori, multiparametric quantum affine spaces,
  quantum complete intersections, quantum Weyl algebras, deformed completed Weyl algebras,
  and finally the algebra $M_q(2)$ of quantum $2\times 2$-matrices.
\end{abstract}

\maketitle
%\tableofcontents

\section*{Introduction}

In this paper we investigate the Hochschild homology of a class of product algebras called the
\emph{smash biproducts}~\cite{CaenIonMiliZhu00} (or also referred as the \emph{twisted tensor
  products}~\cite{CapSchichlVanzura95}) that include Hopf-cross products and Ore extensions.
Then we effectively calculate Hochschild homologies of quantum tori, multiparametric (quantum) affine spaces~\cite{GuccGucc97,Wamb93}, quantum complete
intersections~\cite{BerErd08}, quantum Weyl algebras~\cite{Richard:QuantumWeyl}, deformed
completed Weyl algebras~\cite{DuroMeljSamsSkod07,MeljSkod07}, and  the quantum matrix algebra
$M_q(2)$~\cite{ArtinSchelterTate:QuantumDeformationsOfGLN,Kassel-book}.

Our strategy relies on splitting the Hochschild complex into a (twisted) product of two
Hochschild complexes induced by the decomposition of the underlying algebra.  Then we combine
the homologies of the individual pieces via a suitable spectral sequence.  In doing so, we do
not rely on any ad-hoc resolutions of the underlying algebras as bimodules over themselves.  The
key observation is that if one of the component algebras has Hochschild homological dimension
less than 2, then the spectral sequence degenerates on the $E^2$-page yielding the result
immediately.  We refer the reader to Section~\ref{sect:HomologyOfSmashBiproducts} for technical
details.

%We illustrate our computations on concrete examples. In the opening example, we consider a (finite) Galois extension $K/k$ with the Galois group $G$ in Subsection \ref{Galois-ext}, and then compute the Hochschild homology of the Hopf-crossed product $K\# G$. Next in Subsection \ref{subsect-group-ring}, we deal a smash biproduct algebra $k[G]\#_R k[x,x^{-1}]$, for a finitely generated abelian group.
Here is a plan of the paper: In Section~\ref{sect:smash-biproducts} we recall basic definitions and tools needed to work with smash biproduct algebras, and then in Section~\ref{sect:HomologyOfSmashBiproducts} we construct our homological machinery.  Section~\ref{sect:calculation} contains our calculations.  In Section~\ref{subsect-quantum-tori} we calculate the Hochschild homologies of the quantum plane $k[x]\# k[y]$, the quantum cylinder $k[x]\# k[y,y^{-1}]$, and the quantum torus $k[x,x^{-1}]\# k[y,y^{-1}]$. %In the following three subsections, we test the efficiency of our method on the examples such as the multiparametric affine space~\cite{GuccGucc97,Wamb93}, the quantum complete intersections~\cite{BerErd08}, and the quantum Weyl algebras~\cite{Richard:QuantumWeyl} by reproducing the results of the cited papers more conceptually and more efficiently.
Next, in Section \ref{subsect-multip-aff-sp} we calculate the Hochschild homology of the multiparametric affine space $S(X_\nu,\Lambda)$~\cite{GuccGucc97}, as well as a particular case of quantum Weyl algebras~\cite{Richard:QuantumWeyl}. The Hochschild homology of the quantum complete intersection algebra $C_{a,b}:= k\langle x,y\rangle/\langle x^a, xy-qyx,y^b\rangle $ for every $a,b\geq 2$~\cite{BerErd08} is computed in Section \ref{subsect-quant-comp-int}.  In Section \ref{subsect-quant-Weyl}, we deal with the quantum Weyl algebra $S(X_\mu,Y_\nu,\Lambda)$~ \cite{Richard:QuantumWeyl} in full generality.  Subsection \ref{sect:WeylAlgebras} deals with the $(\G{g}, D)$-deformed Weyl algebra $A^{\rm pol}_{\G{g}, D} := S(V) \rtimes U(\G{g})$, and its completion $A_{\G{g}, D} := \widehat{S(V)} \rtimes U(\G{g})$~\cite{DuroMeljSamsSkod07,MeljSkod07}. Although deformed Weyl algebras are Hopf-cross products, neither component is necessarily of low Hochschild homological dimension. Nevertheless, our method plays a critical role in identifying the (continuous) Hochschild homology of $A_{\G{g}, D}$ with its dense subalgebra $A^{\rm pol}_{\G{g}, D}$. In particular, we obtain the results
\cite[Thm. 2.1]{FeigFeldShoi05} and \cite[Thm. 4]{FeigTsyg83} for the for the completed Weyl algebra $A_{2n}$, since the Hochschild homology of $A^{\rm pol}_{2n}$ is already known.  The last subsection, Subsection \ref{subsect-quant-matrix}, is devoted to the Hochschild homology of the quantum matrix algebra $M_q(2)$. The fact that the algebra $M_q(2)$ is a short tower of Ore extensions~\cite{Kassel-book} makes it amenable for the tools we develop here. In fact, all quantum matrix algebras $M_q(n)$ are towers of Ore extensions~\cite{ArtinSchelterTate:QuantumDeformationsOfGLN,TorrLena00,Kassel-book}.  However, the growth of the lengths of these towers renders the arguments we use in the present paper ineffective. As such, the homology of the algebras $M_q(n)$ for $n>2$, and the quantum linear groups~\cite{ParshallWang:QuantumLinearGroups} demand a different approach, which we postpone to a subsequent paper.

\subsection*{Notation and Conventions}

We fix a ground field $k$ of characteristic 0.  All algebras are assumed to be unital and
associative, but not necessarily finite dimensional. We use $k[S]$ and $k\{S\}$ to denote
respectively the free commutative and free noncommutative polynomial algebras generated by a
set $S$.  We also use $\left<S\right>$ to denote either the $k$-vector space spanned, or the
two-sided ideal, generated by the set $S$ depending on the context.  If $G$ is a group,
$k[G]$ denotes the group algebra of $G$ over $k$.

%\subsection*{Acknowledgments}
 
\section{Smash biproducts, Hopf-cross products, and Ore extensions}\label{sect:smash-biproducts}

\subsection{The smash biproduct of algebras}~

Let us now recall from \cite{CaenIonMiliZhu00} the smash biproduct of
algebras, which are also called the twisted tensor
  products~\cite{CapSchichlVanzura95}. Let $A$ and $B$ be two
algebras, and let $R\colon B\ot A \lra A\ot B$ be a linear map written
as $R(b\ot a) =: {}^Ra\ot b^R$ making the diagrams
\begin{equation}\label{DistributiveLaw}
  \xymatrix{
    & B\ar[dr]^{1\otimes B}\ar[dl]_{B\otimes 1}\\
    B\otimes A \ar[rr]^{ R } & & A\otimes B\\
    & A \ar[ul]^{1\otimes A} \ar[ur]_{A\otimes 1}
  }
  \qquad
  \xymatrix{
    B\otimes B\otimes A \ar[r]^{B\otimes R }\ar[d]_{\mu_B\otimes A}
    & B\otimes A\otimes B \ar[r]^{ R \otimes B}
    & A\otimes B\otimes B \ar[d]^{A\otimes \mu_B}\\
    B\otimes A \ar[rr]^R  & &   A\otimes B\\
    B\otimes A\otimes A \ar[r]_{ R \otimes B} \ar[u]^{B\otimes\mu_A}
    & A\otimes B\otimes A \ar[r]_{A\otimes  R }
    & A\otimes A\otimes B \ar[u]_{\mu_A\otimes B}
  }
\end{equation}
commutative. We shall call any such map a \emph{distributive law}.  One can then show that $A\ot B$ is a unital associative algebra with the multiplication
\[
(a\ot b)(a'\ot b') := a({}^Ra') \ot (b^R)b',
\]
for any $a,a'\in A$ and $b,b' \in B$, and with the unit $1\ot 1 \in A \ot B$,  if and only if the diagrams in \eqref{DistributiveLaw} commute, \cite[Thm. 2.5]{CaenIonMiliZhu00}.  We will use $A\#_R B$ to denote this algebra.

Let us note that the natural inclusions
\[
i_A:A \lra A\#_R B, \quad a \mapsto a\ot 1, \qquad i_B:B \lra A\#_R B, \quad b \mapsto 1\ot b
\]
are algebra maps, and that the map
\[
\Psi:=\mu_{A\#_R B} \circ (i_A \ot i_B) : A\ot B \lra A\#_R B
\]
is an isomorphism of vector spaces. Then, the map $R$ can be recovered as
\[
R = \Psi^{-1} \circ \mu_{A\#_R B} \circ (i_B \ot i_A).
\]

Let us next review a few (more concrete) examples of this construction.

\subsection{Ore extensions}~\label{subsect:OreExtensions}

Let $A$ be an algebra, and $k[X]$ be the polynomial algebra in one indeterminate. Let
$\a:A\to A$ be an algebra homomorphism, and $\delta:A\to A$ be a derivation. Then the
invertible distributive law defined as
\[ R\colon k[X]\ot A \lra A\ot k[X], \qquad R(X \ot a) := \a(a) \ot X + \delta(a) \ot 1 \]
yields the Ore extension associated to the datum $(A,\a,\delta)$, \ie
$A\#_R k[X] \cong A[X,\a,\delta]$, \cite[Ex. 2.11]{CaenIonMiliZhu00}.

% One can also write an Ore extension $A[X,\a,\delta]$ as a smash
% product $A\#_R \C{O}$ between a Hopf algebra $\C{O}$ and $A$ as
% follows. The Hopf algebra $\C{O}$ as an algebra freely generated by
% noncommuting indeterminates $\delta$ and $\alpha$ where $\alpha$ has
% an inverse denoted by $\alpha^{-1}$.  The comultiplication is
% defined as
% \[ \Delta(\alpha^i) = \alpha^i\otimes\alpha^i \quad
%   \Delta(\delta) = \alpha\otimes \delta + \delta\otimes 1 \]
% for every $i\in\B{Z}$ and then extended multiplicatively.  The
% counit is simply
% \[ \epsilon(\alpha) = 1\quad \epsilon(\delta) = 0 \] and the
% antipode is defined as
% \[ S(\alpha) = \alpha^{-1} \quad S(\delta) = -\alpha^{-1}\delta \]
% Then, having a $\C{O}$-module algebra structure on $A$ becomes
% equivalent to having a datum $(A,\a,\delta)$ that defines an Ore
% extension.

We refer the reader to \cite{FaddReshTakh89, Kassel-book} for an account of the Ore extensions in the
quantum group theory.

\subsection{Hopf-cross products}\label{HopfSmashProduct}~

We note from \cite[Ex. 4.2]{CaenIonMiliZhu00} that the smash biproduct construction covers
Hopf-cross products, i.e. algebras of the form $A \ot H$ where $H $ is a Hopf algebra and
$ A$ is a (left) $H$-module algebra.  The latter means that there is an action
$\rt \colon H \otimes A\to A$ which satisfies
\begin{equation}\label{ModuleAlgebra}
   h\rt (ab) = (h_{(1)}\rt a)(h_{(2)}\rt b) 
\end{equation}
for any $h\in H $, and any $a,b\in A$.  Then, there is an algebra structure on $A \rtimes H := A \ot H$ given by  
\[
(a\ot h)(b\ot g) = a(h\ps{1}\rt b) \ot h\ps{2}g,
\]
for any $a,b \in A$, and any $h,g \in H$, with unit $1 \ot 1 \in A \rtimes H$. Now letting
\begin{equation}
  \label{HopfDistributive}
  R\colon H \otimes A\to  A\otimes H, \qquad R(h\otimes a) = h_{(1)}\tr a\otimes h_{(2)} 
\end{equation}
for any $a\in A$ and any $h\in H$ it follows from \eqref{ModuleAlgebra} that
$A \rtimes H = A\#_R H$.

\subsection{Algebras with automorphisms}~\label{GroupAction}

Let $A$ be any unital associative algebra, and let $G$ be a group acting on $A$ via
automorphisms.  Consider the distributive law $R\colon k[G]\otimes A\to A\otimes k[G]$ given
by
\begin{equation}
  R(g\otimes a) = (g\tr a)\otimes g
\end{equation}
for every $a\in A$ and $g\in G$.  Let us denote the smash biproduct algebra $A\#_R k[G]$
simply by $A\# G$.  This is a special case of the smash product given by the distributive
law~\eqref{HopfDistributive} for the Hopf algebra $H=k[G]$.

\subsection{Algebras with derivations}~\label{AlgebrasWithDerivation}

Let $\G{g}$ be a Lie algebra, and $A$ a unital associative algebra on which $\G{g}$ acts by derivations; that is,
\[
X\tr (ab) = (X\tr a)b + a(X\tr b) 
\] 
for any $a,b\in A$, and any $X\in\G{g}$. Then, 
\[ 
R\colon U(\G{g})\otimes A\to A\otimes U(\G{g}), \qquad R(X\otimes a) = (X\tr a)\otimes 1 + a\otimes X 
\] 
determines an invertible distributive law, and hence the smashed biproduct algebra $A \#_R U(\G{g})$ which we simply denote by $A \# \G{g}$. 

Let us note that the above conditions endows $A$ with a $U(\G{g})$-module algebra structure, that is, the algebra $A \# \G{g}$ is a special case of the Hopf crossed product $A\rtimes H$ with $H=U(\G{g})$, and the distributive law~\eqref{HopfDistributive}.

\section{Homology of smash biproducts}\label{sect:HomologyOfSmashBiproducts}
\subsection{The bar complex}~

Given an associative algebra $A$, together with a right $A$-module $V$ and
a left $A$-module $W$, one can form the two sided bar complex
\begin{equation}\label{bar-complex}
\CB_*(V,A,W) := \bigoplus_{n\geq 0} V\otimes A^{\otimes n}\otimes W 
\end{equation}
together with the differentials $d\colon \CB_n(V,A,W)\lra \CB_{n-1}(V,A,W)$, $n\geq 1$, defined as
\begin{align}\label{bar-diff}
d(v\otimes  a_1\otimes\cdots\otimes a_n\otimes w) 
 &=  v\cdot a_1\otimes a_2\otimes\cdots\otimes a_n\otimes w \nonumber\\
  & + \sum_{j=1}^{n-1}(-1)^j v\otimes\cdots\otimes a_j a_{j+1}\otimes\cdots\otimes w \\
  & + (-1)^n v\otimes a_1\otimes\cdots \otimes a_{n-1}\otimes a_n\cdot w.\nonumber
\end{align}

\subsection{Hochschild homology}~

Now, let $A$ be a $k$-algebra and $V$ be an $A$-bimodule, or equivalently, a right module over the enveloping algebra $A^e:= A \ot A^{\rm op}$. Then the Hochschild homology of $A$, with coefficients in $V$, is defined to be
\[
H_\ast(A,V) := \tor^{A^e}_\ast(A,V).
\]
Equivalently, see for instance \cite[Prop. 1.1.13]{Loday-book}, the Hochschild homology of $A$, with coefficients in the $A$-bimodule $V$, is the homology of the complex
\[
\CH_\ast(A,V) := \bigoplus_{n\geq 0}\CH_n(A,V), \qquad \CH_n(A,V) := V \ot A^{\ot\,n}
\]
with respect to the differential $b\colon \CH_n(A,V)\lra \CH_{n-1}(A,V)$, defined for $n\geq 1$ as
\begin{align}\label{Hochschild-diff}
b(v\otimes  a_1\otimes\cdots\otimes a_n) 
  & = v\cdot a_1\otimes a_2\otimes\cdots\otimes a_n \nonumber\\
  & + \sum_{j=1}^{n-1}(-1)^j v\ot a_1 \otimes\cdots\otimes a_j a_{j+1}\otimes\cdots\otimes a_n  \\
  & + (-1)^n a_n\cdot v\otimes a_1\otimes\cdots \otimes a_{n-1}.\nonumber
\end{align}

\subsection{Hochschild homology with invertible distributive laws}~\label{Subsect-Hochschild-smash-E1-E2}

Let $R\colon B\otimes A\to A\otimes B$ be an invertible distributive law, and $V$ an
$A\#_R B$-bimodule, that is, the diagram
\begin{equation}
  \xymatrix{
    B\otimes A\otimes V \ar@/^1pc/[dd]\ar[r] & B\otimes V \ar[rd] &
    & V\otimes A \ar[dl] & \ar[l] V\otimes B\otimes A \ar@/^1pc/[dd] \\
    & & V\\
    A\otimes B\otimes V \ar[r]\ar@/^1pc/[uu] & A\otimes V \ar[ur] &
    & V \otimes B \ar[ul] & \ar[l] V\otimes A\otimes B \ar@/^1pc/[uu]
  }
\end{equation}
is commutative. We now introduce a bisimplicial complex computing the Hochschild homology, with coefficients, of the algebra $A\#_R B$. We shall need an iterative use of the distributive law (and its inverse), as such, we make use of the notation
\[
R(b_j\ot a_i) = {}^{R_{ji}}a_i \ot b_j^{R_{ji}}, \qquad R^{-1}(a_i\ot b_j) = {}^{L_{ji}}b_j \ot a_i^{L_{ji}}.
\]

\begin{proposition}\label{prop:diagonal}
Given a smash biproduct algebra $A\#_R B$ with an invertible distributive law $R:B\ot A \to A \ot B$, and an $A\#_R B$-bimodule $V$, the isomorphism
\begin{align*}
v \ot (a_1, b_1) & \odots (a_n,b_n) \mapsto  \\
& a_1^{L_{11} \ldots  L_{n1}} \odots a_{n-1}^{L_{(n-1)(n-1)} L_{(n-1)n}} \ot a_n^{L_{nn}}\ot v \ot {}^{L_{11}} b_1\ot {}^{L_{21} L_{22}}b_2  \odots {}^{L_{n1} \ldots L_{nn}}b_n
\end{align*}
identifies the Hochschild homology complex of the algebra $A\#_R B$, with coefficients in $V$, with the diagonal of the bisimplicial complex given by $\C{B}_{p,q}(A,V,B):=A^{\ot\,p} \ot V \ot B^{\ot\,q}$ with the structure maps
\begin{align*}
  \partial^h_i & (a_1\odots a_q\ot v\ot b_1\odots b_p)\\
   := & \begin{cases}
    a_1\odots a_q\ot v\tl b_1\odots b_p & \text{ if } i=0,\\
    a_1\odots a_q\ot v \ot b_1\odots  b_ib_{i+1}\odots b_p & \text{ if } 1\leq i\leq p-1,\\
    {}^{R_{p1} }a_1\odots {}^{R_{pq} }a_q\ot b_p^{R_{p1} \ldots  R_{pq}}\tr v \ot b_1\odots b_{p-1} & \text{ if } i=p,     
  \end{cases}\\\\
  \partial^v_j & (a_1\odots a_q\ot v\ot b_1\odots b_p)\\
  := & \begin{cases}
 a_2\odots a_q\ot v\tl {}^{R_{11} \ldots R_{p1}} a_1\ot b_1^{R_{11}}\odots b_p^{R_{p1}} & \text{ if } j=0,\\
 a_1\odots a_ja_{j+1} \ot a_q\odots v\ot b_1\odots b_p & \text{ if } 1\leq j\leq q-1, \\
 a_1\odots a_{q-1}\ot a_q\tr v\ot b_1\odots b_p & \text{ if } j=q.
  \end{cases}
\end{align*}
\end{proposition}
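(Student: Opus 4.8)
The plan is to verify directly that the displayed map, which I denote $\Phi$, is an isomorphism of chain complexes, where the diagonal carries in homological degree $n$ the total face operator $\sum_{i=0}^{n}(-1)^i\,\partial^h_i\partial^v_i$. First I observe that $\Phi$ is built entirely out of iterated copies of the inverse distributive law $R^{-1}$ (the $L_{ji}$'s), which is invertible by hypothesis; hence in each degree $\Phi$ is a composite of invertible ``braiding-type'' maps on a tensor power of $A$, $V$ and $B$, and is therefore a linear isomorphism whose inverse is assembled from the corresponding copies of $R$. Thus the only genuine content is that $\Phi$ intertwines the two differentials.

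Since both differentials are alternating sums of faces, it suffices to match faces one at a time, i.e.\ to show $\Phi\circ d_i^{\CH}=(\partial^h_i\partial^v_i)\circ\Phi$ for each $0\leq i\leq n$, where $d_i^{\CH}$ is the $i$-th face of the Hochschild complex of $A\#_R B$. The guiding principle is that a single multiplication in $A\#_R B$ decouples, after untangling, into one multiplication in $A$ and one in $B$: for an inner index $1\leq i\leq n-1$ the Hochschild face multiplies $(a_i\ot b_i)(a_{i+1}\ot b_{i+1})=a_i({}^{R}a_{i+1})\ot(b_i^{R})b_{i+1}$, and under $\Phi$ the twist produced by $R$ is exactly the one already recorded in the superscripts of $\Phi$. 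The collapsed factor then becomes a plain product $a_ia_{i+1}$ on the $A$-side (the vertical face $\partial^v_i$) together with a plain product $b_ib_{i+1}$ on the $B$-side (the horizontal face $\partial^h_i$). Verifying this is a coherence computation: the hexagon in~\eqref{DistributiveLaw}, together with its $R^{-1}$-analogue, guarantees that pulling a product past an element, or an element past a product, agrees with pulling the factors across one at a time, so that the iterated indices $R_{ji}$ and $L_{ji}$ compose associatively and the spurious twists cancel.

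The boundary faces $d_0^{\CH}$ and $d_n^{\CH}$ are where I expect the real work. These encode the right and left actions of $A\#_R B$ on $V$, and under $\Phi$ they must be reproduced by the composites $\partial^h_0\partial^v_0$ and $\partial^h_n\partial^v_n$, which split a single module action $v\cdot(a_1\ot b_1)$, resp.\ $(a_n\ot b_n)\cdot v$, into the separate actions of the $A$- and $B$-parts. Here the long strings of twists appearing in $\partial^v_0$, namely $v\tl{}^{R_{11}\ldots R_{p1}}a_1$ together with $b_1^{R_{11}}\odots b_p^{R_{p1}}$, and dually in $\partial^h_p$ where $b_p^{R_{p1}\ldots R_{pq}}\tr v$ arises, must be matched against the bimodule structure of $V$; this is precisely the content of the commutative bimodule diagram preceding the statement, which expresses the $R$-compatibility of the two one-sided actions. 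I would therefore isolate a pair of ``straightening lemmas'' describing how a single $a$ (resp.\ $b$) is transported across the entire word so as to act on $V$, prove them by induction on the number of crossings using the distributive-law hexagon, and then feed them into the boundary-face check.

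Finally I would record the routine verification that the operators $\partial^h_\bullet$ and $\partial^v_\bullet$ satisfy the simplicial identities in each direction and commute with one another, so that $\C{B}_{\bullet,\bullet}(A,V,B)$ is genuinely bisimplicial and its diagonal is defined; this follows from associativity of $\mu_A$, $\mu_B$ and of the two actions, again combined with the coherence of $R$. The main obstacle throughout is combinatorial rather than conceptual: tracking the iterated twist-indices $R_{ji}$ and $L_{ji}$ accurately, so that every $R$ introduced by a multiplication is annihilated by an $L$ coming from $\Phi$ and vice versa. Fixing a clean inductive notation for these crossings at the outset is what makes the entire computation tractable.
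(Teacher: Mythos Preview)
Your plan is sound and in fact more thorough than the paper's own argument. The paper only verifies that $\C{B}_{\bullet,\bullet}(A,V,B)$ is bisimplicial, checking explicitly the least obvious commutativity $\partial^h_p\partial^v_0=\partial^v_0\partial^h_p$ (this is where the $A\#_R B$-bimodule compatibility of $V$ enters) and noting that the remaining cases follow from the hexagon in~\eqref{DistributiveLaw}; the chain-map property of $\Phi$ is left implicit. You, by contrast, lay out both halves: you correctly identify the diagonal differential as $\sum_i(-1)^i\partial^h_i\partial^v_i$, propose to match it face-by-face with the Hochschild differential via $\Phi$, and single out the extremal faces $i=0,n$ as the places where the bimodule compatibility diagram is needed. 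One organizational remark: the paper's displayed computation of $\partial^h_p\partial^v_0=\partial^v_0\partial^h_p$ is precisely the ``straightening lemma'' you allude to for transporting a single $a$ or $b$ across the word to act on $V$, so doing that calculation first lets you reuse it verbatim in both the bisimpliciality check and the boundary-face matching.
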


\begin{proof}
We observe that
\begin{align*}
\partial^h_p\partial^v_0 & (a_1\odots a_q\ot v\ot b_1\odots b_p) \\
= & \partial^h_p\left(a_2\odots a_q\ot v\tl {}^{R_{11} \ldots R_{p1}} a_1\ot b_1^{R_{11}}\odots b_p^{R_{p1}}\right) \\
= & {}^{R_{p2}}a_2\odots {}^{R_{pq}}a_q\ot b_p^{R_{p1}\ldots R_{pq}}\tr\left(v\tl {}^{R_{11} \ldots R_{p1}} a_1\right)\ot b_1^{R_{11}}\odots b_{p-1}^{R_{(p-1)1}} \\
= & {}^{R_{p2}}a_2\odots {}^{R_{pq}}a_q\ot \left(b_p^{R_{p1}\ldots R_{pq}}\tr v\right)\tl {}^{R_{11} \ldots R_{p1}}a_1\ot b_1^{R_{11}}\odots b_{p-1}^{R_{(p-1)1}} \\
= & \partial^v_0\left({}^{R_{p1} }a_1\odots {}^{R_{pq} }a_q\ot b_p^{R_{p1} \ldots  R_{pq}}\tr v \ot b_1\odots b_{p-1}\right)  \\
= & \partial^v_0\partial^h_p(a_1\odots a_q\ot v\ot b_1\odots b_p).
\end{align*}
For $1 \leq i \leq p-1$ and $1 \leq j \leq q-1$, the commutativity $\partial^h_i\partial^v_j = \partial^v_j\partial^h_i$ follows from the commutativity of the second diagram in \eqref{DistributiveLaw}.
\end{proof}

Considering the spectral sequence associated to the filtration by the rows (or the columns) of the bicomplex given in Proposition \ref{prop:diagonal}, we obtain the following result.

\begin{theorem}\label{thm:HochschildOfSmashProducts}
  Let $R\colon B\otimes A\to A\otimes B$ be an invertible distributive law, and let $V$ be an $A\#_R B$-bimodule.  Then there are two spectral sequences whose $E^1$-terms are given by
\begin{equation}\label{eqn-E1}
E^1_{p,q} = H_q(A,\CH_p(B,V)),\qquad \qquad {}'E^1_{p,q} = H_p(B,\CH_q(A,V))
\end{equation}
which converge to the Hochschild homology of the smash biproduct $A\#_R B$, with coefficients in $V$.
\end{theorem}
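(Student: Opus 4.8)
The plan is to treat the bisimplicial $k$-module $\C{B}_{\bullet,\bullet}(A,V,B)$ produced by Proposition~\ref{prop:diagonal} as the input of a first-quadrant double complex and then invoke the two standard spectral sequences attached to such a bicomplex. First I would pass from the bisimplicial structure to a double complex in the usual way: setting $b^h:=\sum_{i=0}^{p}(-1)^i\partial^h_i$ and $b^v:=\sum_{j=0}^{q}(-1)^j\partial^v_j$, the horizontal simplicial identities give $(b^h)^2=0$, the vertical ones give $(b^v)^2=0$, and the commutation relations $\partial^h_i\partial^v_j=\partial^v_j\partial^h_i$ established in Proposition~\ref{prop:diagonal} give $b^hb^v=b^vb^h$. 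Absorbing the customary sign $(-1)^p$ into one of the two differentials turns $(\C{B}_{p,q},b^h,b^v)$ into a genuine bicomplex, with total complex $\Tot\C{B}$.

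Next I would compare the diagonal with the total complex. By the Eilenberg--Zilber theorem for bisimplicial modules (in its normalized, Dold--Kan form), the chain complex of $\diag\C{B}$, with $(\diag\C{B})_n=\C{B}_{n,n}$, is chain homotopy equivalent --- through the Alexander--Whitney and shuffle maps --- to $\Tot\C{B}$. Since Proposition~\ref{prop:diagonal} already identifies $\CH_\ast(A\#_R B,V)$ with the chain complex of $\diag\C{B}$, we conclude that $\Tot\C{B}$ computes $H_\ast(A\#_R B,V)$. Because the bicomplex is concentrated in the first quadrant, both of its canonical filtrations, by columns and by rows, are bounded; hence the two associated spectral sequences converge, and each of them converges to $H_\ast(\Tot\C{B})\cong H_\ast(A\#_R B,V)$.

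It then remains to identify the two $E^1$-pages, which I would do by taking homology in one direction at a time. Fixing the horizontal degree $p$ and running $b^v$ first, the formulas for $\partial^v_j$ exhibit the $p$-th column as precisely the Hochschild complex of $A$ with coefficients in $\CH_p(B,V)=V\ot B^{\ot p}$: the face $\partial^v_q$ supplies the left $A$-action $a_q\tr v$, the inner faces $\partial^v_j$ with $1\leq j\leq q-1$ are the internal multiplications $a_ja_{j+1}$, and $\partial^v_0$ supplies the \emph{twisted} right $A$-action in which the incoming $a_1$ is transported past $b_1\odots b_p$ by $R$, producing $v\tl{}^{R_{11}\ldots R_{p1}}a_1\ot b_1^{R_{11}}\odots b_p^{R_{p1}}$. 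This gives $E^1_{p,q}=H_q(A,\CH_p(B,V))$. Symmetrically, fixing $q$ and running $b^h$ first identifies each row with the Hochschild complex of $B$ with coefficients in $\CH_q(A,V)$, where $\partial^h_0$ is the untwisted right $B$-action $v\tl b_1$, $\partial^h_p$ is the $R$-twisted left $B$-action $b_p^{R_{p1}\ldots R_{pq}}\tr v$, and the inner faces are the multiplications $b_ib_{i+1}$; this yields ${}'E^1_{p,q}=H_p(B,\CH_q(A,V))$.

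The main obstacle I anticipate is not the formal spectral-sequence machinery but the bookkeeping in this last step: one must verify that $\CH_p(B,V)$ is a bona fide $A$-bimodule under the untwisted left action and the $R$-twisted right action above (and dually that $\CH_q(A,V)$ is a $B$-bimodule), and that $b^v$, respectively $b^h$, is \emph{literally} the Hochschild differential \eqref{Hochschild-diff} for those coefficients. The bimodule axioms are exactly the vertical, respectively horizontal, simplicial identities, so they are guaranteed by the bisimplicial structure of Proposition~\ref{prop:diagonal}; the only delicate point is to check that the iterated instances ${}^{R_{ji}}a_i$ and $b_j^{R_{ji}}$ compose consistently as the degrees $p,q$ grow, which is precisely the content of the second hexagonal diagram in \eqref{DistributiveLaw}.
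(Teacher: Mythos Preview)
Your proposal is correct and follows exactly the approach the paper takes: the paper's proof is a one-sentence appeal to the row and column spectral sequences of the bicomplex underlying the bisimplicial module of Proposition~\ref{prop:diagonal}. You have simply made explicit the standard details the paper leaves implicit---the Eilenberg--Zilber passage from the diagonal to the total complex, and the identification of each row and column with a Hochschild complex for the twisted bimodule coefficients.
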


\begin{corollary}\label{SpecialCase}
Let $\G{g}$ be a Lie algebra action on an algebra $A$ by derivations, and let $G$ be a
 group action on an algebra $B$ by automorphisms. Let also $V$ be an $A\# \G{g}$-bimodule, and $W$ a $B\# G$-bimodule. Then there are two spectral sequences such that
  \begin{equation}
    \label{eq:LieAction}
     H_{p+q}(A\# \G{g},V) \Leftarrow {}'E^1_{p,q} = H_p(\G{g},\CH_q(A,V)^{ad}),
  \end{equation}
  and
  \begin{equation}
    \label{eq:GroupAction}
H_{p+q}(B\# G,W) \Leftarrow {}'E^1_{p,q} = H_p(G,\CH_q(B,W)^{ad}). 
  \end{equation}
\end{corollary}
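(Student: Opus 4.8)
The plan is to specialize the second spectral sequence of Theorem~\ref{thm:HochschildOfSmashProducts} and then read off the Hochschild homology of $U(\G{g})$ and of $k[G]$ as Lie algebra, respectively group, homology. First I would recall from Subsections~\ref{AlgebrasWithDerivation} and~\ref{GroupAction} that $A\#\G{g}=A\#_R U(\G{g})$ and $B\#G=B\#_R k[G]$ are smash biproducts for the invertible distributive laws $R(X\ot a)=(X\tr a)\ot 1+a\ot X$ and $R(g\ot a)=(g\tr a)\ot g$. Taking the second factor of the smash biproduct to be $U(\G{g})$ (resp.\ $k[G]$) in Theorem~\ref{thm:HochschildOfSmashProducts}, the second spectral sequence reads
\[
{}'E^1_{p,q}=H_p(U(\G{g}),\CH_q(A,V)) \qquad \left(\text{resp.}\ {}'E^1_{p,q}=H_p(k[G],\CH_q(B,W))\right),
\]
and it already converges to $H_{p+q}(A\#\G{g},V)$ (resp.\ $H_{p+q}(B\#G,W)$). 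Thus convergence comes for free from the theorem, and only the rewriting of the $E^1$-page remains.

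For that rewriting I would invoke the classical homological interpretation of Hochschild homology of these two cocommutative Hopf algebras: for any $U(\G{g})$-bimodule $M$ one has $H_p(U(\G{g}),M)\cong H_p(\G{g},M^{ad})$, and for any $k[G]$-bimodule $M$ one has $H_p(k[G],M)\cong H_p(G,M^{ad})$, where $M^{ad}$ denotes $M$ endowed with the adjoint action $h\cdot m=h\ps{1}\,m\,S(h\ps{2})$; this reduces to the commutator $X\cdot m=Xm-mX$ for a primitive $X\in\G{g}$ and to conjugation $g\cdot m=gmg^{-1}$ for a group-like $g\in G$ (see \cite{Loday-book}). Applying these isomorphisms to the coefficient module for each fixed $q$ then converts ${}'E^1_{p,q}$ into $H_p(\G{g},\CH_q(A,V)^{ad})$, respectively $H_p(G,\CH_q(B,W)^{ad})$.

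The one point demanding care—and the main obstacle—is to check that the bimodule structure which $\CH_q(A,V)$ carries inside the bicomplex of Proposition~\ref{prop:diagonal} becomes, under the adjoint action, exactly the diagonal module named by the superscript $ad$. Writing $a_1,\dots,a_q$ for the algebra tensor factors in either case and reading off $\partial^h_0$ and $\partial^h_p$, the right action is $(a_1\odots a_q\ot v)\cdot b=a_1\odots a_q\ot(v\tl b)$, while the left action first transports $b$ through the algebra factors by $R$ and only then lets it act on $v$. Iterating $R$ with the coproduct of the Hopf factor—$X$ primitive and $g$ group-like—yields
\[
X\tr(a_1\odots a_q\ot v)=\sum_{i=1}^{q} a_1\odots(X\tr a_i)\odots a_q\ot v + a_1\odots a_q\ot(X\tr v),
\]
\[
g\tr(a_1\odots a_q\ot v)=(g\tr a_1)\odots(g\tr a_q)\ot(g\tr v).
\]
Forming $h\ps{1}\,m\,S(h\ps{2})$ out of these then produces precisely the diagonal action: $\G{g}$ acts by its defining derivations on each algebra tensor factor and by the commutator on $V$, and $G$ acts by its defining automorphisms on each algebra tensor factor and by conjugation on $W$; these are the modules $\CH_q(A,V)^{ad}$ and $\CH_q(B,W)^{ad}$ respectively. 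Once this matching is established, the two asserted spectral sequences follow at once.
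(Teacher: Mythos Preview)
Your proposal is correct and matches the paper's intended approach: the paper states Corollary~\ref{SpecialCase} without proof, as an immediate specialization of Theorem~\ref{thm:HochschildOfSmashProducts} together with the standard identifications $H_p(U(\G{g}),M)\cong H_p(\G{g},M^{ad})$ and $H_p(k[G],M)\cong H_p(G,M^{ad})$. Your explicit verification that the $B$-bimodule structure on $\CH_q(A,V)$ coming from $\partial^h_0$ and $\partial^h_p$ in Proposition~\ref{prop:diagonal} yields the diagonal adjoint action under these Hopf-algebraic distributive laws is exactly the missing detail the paper leaves implicit.
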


\subsection{Homology of smash biproducts by amenable algebras}~

We will call a unital associative algebra $B$ as \emph{amenable} if it has Hochschild
homological dimension 0, in other words $B$ is a $B^e$-flat
module~\cite{Johnson:AmenableBanachAlgebras}.  In the discrete (algebraic) case, the typical
examples are group algebras $k[G]$ of finite groups where the characteristic of the field $k$ does not
divide the order of the group $G$.  In the measurable case, as in the case dealt originally
in~\cite{Johnson:AmenableBanachAlgebras}, the typical examples are of the form
$L^1(G)$ where $G$ is an amenable group.  In particular, all compact groups and locally
compact abelian groups are amenable,~\cite[Chap.3]{Pier:AmenableGroups}.

In the sequel, we make frequent use of the notation
\begin{equation}\label{eqn-H_0}
V_B:= \frac{V}{[B,V]} = \frac{V}{\langle bv-vb\mid v\in V, b\in B \rangle}  \cong H_0(B,V)
\end{equation}
and
\begin{equation}
V^B := \{v\in V|\ bv=vb, \text{ for all } b\in B\} \cong H^0(B,V) 
\end{equation}
for every $B$-bimodule $V$.

\begin{theorem}\label{thm:amenable}
Let $A$ and $B$ be two unital associative algebras, where $B$ is amenable, and let $R\colon B\otimes A\to A\otimes B$ be an invertible distributive law. Then,
\[ 
H_n(A\#_RB,V)\cong H_n(A,V)_B 
\] 
for any $n\geq 0$, and for any $A\#_R B$-bimodule $V$.
\end{theorem}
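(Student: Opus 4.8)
The plan is to feed the amenability hypothesis into the spectral sequence of Theorem~\ref{thm:HochschildOfSmashProducts} and show it collapses. I would use the second spectral sequence from~\eqref{eqn-E1}, whose $E^1$-page is ${}'E^1_{p,q} = H_p(B,\CH_q(A,V))$, where $\CH_q(A,V) = V \ot A^{\ot q}$ is regarded as a $B$-bimodule via the distributive law. Since $B$ is amenable, its Hochschild homological dimension is $0$, which means $H_p(B,M) = 0$ for all $p \geq 1$ and every $B$-bimodule $M$, while $H_0(B,M) = M_B$ in the notation of~\eqref{eqn-H_0}. Applying this with $M = \CH_q(A,V)$ annihilates every column except $p=0$, so that ${}'E^1_{p,q}$ is concentrated in the single row $p=0$, with entries ${}'E^1_{0,q} = \CH_q(A,V)_B$.

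Once the $E^1$-page is concentrated in one row, the spectral sequence degenerates at $E^2$: all higher differentials $d^r$ for $r \geq 2$ necessarily vanish because they shift the row index $p$, and there is only one nonzero row to land in or out of. Hence $H_n(A\#_R B, V) \cong {}'E^2_{0,n}$, where ${}'E^2_{0,n}$ is the $n$-th homology of the single surviving complex $\big(\CH_\bullet(A,V)_B, \bar b\big)$, the $B$-coinvariants of the Hochschild complex of $A$ equipped with the induced differential. The remaining task is therefore to identify this homology with $H_n(A,V)_B$.

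The key step, and the one I expect to be the main obstacle, is exactly this identification: I must show that taking homology and taking $B$-coinvariants commute, i.e.\ that
\[
H_n\big(\CH_\bullet(A,V)_B\big) \cong H_n(A,V)_B .
\]
This is not automatic, since $(-)_B = H_0(B,-)$ is only right exact in general. The reason it holds here is again amenability: $B$ being $B^e$-flat means $\CH_q(A,V) \mapsto \CH_q(A,V)_B$ is an \emph{exact} functor on the relevant category of $B$-bimodules (the higher derived functors $H_p(B,-)$ vanish), and an exact functor commutes with the formation of homology of a complex. I would spell this out by noting that $(-)_B \cong B \ot_{B^e} (-)$ and that flatness of $B$ over $B^e$ makes this tensor functor exact, so it passes through cycles and boundaries without obstruction. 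A final point requiring care is that the $B$-bimodule structure on each $\CH_q(A,V)$ is the one transported through the distributive law $R$ (and its inverse $L$) via the diagonal isomorphism of Proposition~\ref{prop:diagonal}; I would verify that the induced differential on the coinvariants $\CH_\bullet(A,V)_B$ is precisely the Hochschild differential $b$ of~\eqref{Hochschild-diff} for $A$ with coefficients in $V$, descended to coinvariants, so that its homology is indeed $H_\ast(A,V)_B$. With exactness handling the commutation and the diagonal isomorphism handling the differential, the stated formula follows for all $n \geq 0$.
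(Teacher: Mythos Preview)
Your proposal is correct and follows essentially the same approach as the paper: you invoke the spectral sequence of Theorem~\ref{thm:HochschildOfSmashProducts}, use amenability to kill all $p\geq 1$ on the $E^1$-page, and then use $B^e$-flatness of $B$ to commute $(-)_B$ with homology so that ${}'E^2_{0,q}=H_q(A,V)_B$. One cosmetic point: what survives at $p=0$ is a single \emph{column}, not a row, in the paper's indexing, but this does not affect the argument.
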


\begin{proof}
  By Theorem \ref{thm:HochschildOfSmashProducts} we have
\[
H_\ast(A\#_RB,V) \Leftarrow {}'E^1_{p,q} = H_p(B,\CH_q(A,V)).
\]
Since $B$ is amenable, we have
\[
{}'E^1_{p,q} \cong H_p(B,\CH_q(A,V)) \cong \begin{cases}
\CH_q(A,V)_B & \text{ if } p =0,\\
0 & \text{ otherwise}.
\end{cases}
\]
Furthermore, since $B$ is flat as a left $B^e$-module, the functor $(\ \cdot\ )\otimes_{B^e}B$ is exact. As such,
\[ 
H_\ast(A\#_RB,V) \Leftarrow {}'E^2_{p,q} = \begin{cases}
H_q(A,V)_B & \text{ if } p =0,\\
0 & \text{ otherwise}.
\end{cases}\] 
The result then follows from the spectral sequence consisting of only one column.
\end{proof}

In particular, for $V = A$ and $B = k[G]$, where $G$ is a finite group, we have the following.

\begin{corollary}
Let $G$ be a finite group acting on a unital associative algebra $A$ by automorphisms. Then
\[ 
H_\ast(A\# G,A) \cong HH_\ast(A)_{kG} \cong HH_\ast(A)^G,
\] 
where $HH_\ast(A)^G$ denotes the space of $G$-invariants (under the diagonal action).
\end{corollary}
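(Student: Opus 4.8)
The plan is to read this off Theorem~\ref{thm:amenable} with $B=kG$, and then convert coinvariants into invariants by the usual averaging. First I would verify that the hypothesis of Theorem~\ref{thm:amenable} is met, namely that $kG$ is amenable. Since $G$ is finite and the ground field has characteristic $0$, Maschke's theorem makes $kG$ semisimple, hence separable, and a separable algebra is projective (in particular flat) over its enveloping algebra $(kG)^e$; thus $kG$ has Hochschild homological dimension $0$. The distributive law of Section~\ref{GroupAction}, $R(g\otimes a)=(g\tr a)\otimes g$, is invertible, so Theorem~\ref{thm:amenable} applies to $A\#G=A\#_R kG$ and gives, for $V=A$ with its canonical $A\#G$-bimodule structure,
\[
H_n(A\# G,A)\;\cong\;H_n(A,A)_{kG}\;=\;HH_n(A)_{kG}
\]
for every $n\ge 0$.

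Next I would unpack the $kG$-bimodule coinvariants appearing on the right. By definition \eqref{eqn-H_0}, for a $kG$-bimodule $M$ one has $M_{kG}=M/[kG,M]=M/\langle gm-mg\rangle$, which is exactly the space of coinvariants for the adjoint (conjugation) action $g\cdot m:=gmg^{-1}$: indeed right multiplication by $g$ is invertible, so $\langle gm-mg\rangle=\langle g\cdot m-m\rangle$ is the augmentation submodule of the adjoint left $kG$-module. I would then identify this adjoint action on $HH_\ast(A)$ explicitly. By functoriality of Hochschild homology, each automorphism $\phi_g\colon A\to A$ induces the diagonal map $a_0\otimes\cdots\otimes a_n\mapsto (g\tr a_0)\otimes\cdots\otimes(g\tr a_n)$ on the complex $\CH_\bullet(A,A)$, and this is precisely the adjoint action flagged by the superscript ${}^{ad}$ in Corollary~\ref{SpecialCase}, equation~\eqref{eq:GroupAction}. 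Hence $HH_\ast(A)_{kG}$ is the space of coinvariants of $HH_\ast(A)$ under the diagonal $G$-action.

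Finally I would produce the isomorphism $HH_\ast(A)_{kG}\cong HH_\ast(A)^G$ by averaging. For any $kG$-module $M$ with $|G|$ invertible in $k$, the element $e=\tfrac{1}{|G|}\sum_{g\in G}g$ is an idempotent with image $M^G$ and kernel the augmentation submodule $\langle g\cdot m-m\rangle$ defining $M_G$; consequently the composite $M^G\hookrightarrow M\twoheadrightarrow M_G$ is an isomorphism. Applying this to $M=HH_\ast(A)$ with the diagonal action yields $HH_\ast(A)_{kG}\cong HH_\ast(A)^G$, the $G$-invariants under the diagonal action, completing the chain of isomorphisms.

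The routine inputs (semisimplicity of $kG$, exactness of coinvariants, the averaging idempotent) are all standard consequences of $\operatorname{char}k=0$ and finiteness of $G$; the one step deserving genuine care is the middle one, namely confirming that the bimodule coinvariants $H_\ast(A,A)_{kG}$ inherited from Theorem~\ref{thm:amenable} coincide with coinvariants under the \emph{diagonal} automorphism action rather than some one-sided or twisted action. This is where the twisting by $R$ in the bisimplicial structure of Proposition~\ref{prop:diagonal} must be tracked carefully, and I expect it to be the main obstacle; once the adjoint action is correctly identified with the diagonal action, the remainder is formal.
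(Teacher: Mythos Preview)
Your proposal is correct and follows the route the paper intends: the corollary is stated there as an immediate consequence of Theorem~\ref{thm:amenable} with $V=A$ and $B=k[G]$, and no separate proof is given. The extra details you supply---Maschke's theorem for amenability of $kG$, the identification of $(\,\cdot\,)_{kG}$ with coinvariants under the diagonal $G$-action, and the averaging idempotent yielding $HH_\ast(A)_{kG}\cong HH_\ast(A)^G$---are exactly the points the paper leaves implicit.
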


\subsection{Homology of smash biproducts by smooth algebras}~

A unital associative $k$-algebra $B$ is called \emph{smooth} if $B$ has Hochschild
cohomological dimension 1, \ie the kernel of the multiplication map $\mu_B:B\otimes B\to B$
is $B^e$-projective; \cite[Lemma 2.3]{Schelter:SmoothAlgebras}. Such algebras are also
referred as \emph{quasi-free}; \cite{CuntzQuillen:NonsingularityI}.  Among the most basic examples of smooth algebras, there are $k[x]$ and $k[x,x^{-1}]$, see \cite[Ex. 3.4.3]{Loday-book}. Similarly, we call an
algebra \emph{$m$-smooth} if $\Omega_{B|k}$ has Hochschild homological dimension $m$. As such, an
ordinary smooth algebra is $0$-smooth.  In \cite{vdBer98}, a smooth algebra is defined to an
algebra $B$ whose projective $B^e$-resolution is finite, and contains only finitely generated
$B^e$-modules.  The quintessential example is $B=S(V)$; the polynomial algebra with
$m+1=\dim_k(V)$-indeterminates which is $m$-smooth.
% \begin{remark}
%   It is also known that if $A\subseteq B$ is an extension of commutative
%   algebras, $B^e$ is noetherian and $m$-smooth for some $m\geq 0$, then the
%   anti-symmetrization map
%   \[ 
%     \Omega^n_{B|A} \longrightarrow HH_n(B|A) 
%   \] 
%   yields an isomorphism for every $n\geq 0$ where $\Omega_{B|A}$ is the $B^e$-module of Kähler
%   differentials relative to the subalgebra $A$.  See~\cite[Prop. C]{Andr74} and
%   \cite[Thm. 1]{Rid90}.
% \end{remark}

\begin{lemma}\label{lem:aux}
Let $B$ be a smooth algebra, and $V$ a $B$-bimodule. Then,
\[
H_1(B,V) \cong V^B \cong H^0(B,V).
\]
\end{lemma}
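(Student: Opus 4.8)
The plan is to trade the bar complex for the two-term projective resolution of $B$ over its enveloping algebra $B^e=B\ot B^{\rm op}$ supplied by smoothness, and then read off $H_1$ and $H^0$ from its two ends. Write $\Om_{B|k}:=\Ker(\mu_B)$ for the bimodule of noncommutative one-forms, so that there is a short exact sequence of $B^e$-modules
\[
0\lra \Om_{B|k}\xra{\iota} B^e\xra{\mu_B} B\lra 0 .
\]
Smoothness means exactly that $\Om_{B|k}$ is $B^e$-projective, so this is a length-one projective resolution of $B$. Since $H_\ast(B,V)=\tor^{B^e}_\ast(B,V)$ and $H^\ast(B,V)=\ext^\ast_{B^e}(B,V)$, every group in sight is computed from this resolution together with $B^e\ot_{B^e}V\cong V\cong\Hom_{B^e}(B^e,V)$.

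For the cohomological identification I would apply $\Hom_{B^e}(-,V)$. The complex reduces to $V\xra{\iota^\ast}\Hom_{B^e}(\Om_{B|k},V)$ in degrees $0,1$, so $H^0(B,V)=\Ker(\iota^\ast)$. Since $\Om_{B|k}$ is generated as a left $B$-module by the elements $db:=1\ot b-b\ot 1$ (any $\sum b_i\ot b_i'\in\Ker\mu_B$ equals $\sum b_i\cdot db_i'$), it suffices to test $\iota^\ast(v)$ on these; one finds $db\mapsto vb-bv$, whence $\iota^\ast(v)=0$ iff $bv=vb$ for all $b$. This gives $H^0(B,V)\cong V^B$, the right-hand isomorphism of the lemma.

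For the homological side, applying $(-)\ot_{B^e}V$ to the same resolution yields the four-term exact sequence
\[
0\lra H_1(B,V)\lra \Om_{B|k}\ot_{B^e}V\xra{\alpha} V\lra H_0(B,V)\lra 0 ,
\]
so $H_1(B,V)=\Ker(\alpha)$. Evaluating $\alpha$ on the generators gives $\alpha(db\ot v)=vb-bv$, the \emph{same} commutator expression met above; its cokernel is $V/[B,V]=V_B=H_0(B,V)$, a reassuring consistency check.

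The crux is to pin $\Ker(\alpha)$ to $V^B$. For the smooth algebras actually used, $B=k[x]$ and $B=k[x,x^{-1}]$, the module $\Om_{B|k}$ is \emph{free of rank one} over $B^e$ on the single generator $dx$; equivalently the resolution collapses to
\[
0\lra B^e\xrightarrow{\ \cdot\,(x\ot 1-1\ot x)\ } B^e\lra B\lra 0 .
\]
Then $\Om_{B|k}\ot_{B^e}V\cong V$ and $\alpha$ is the single map $v\mapsto vx-xv=-[x,v]$, whose kernel is $\{v:xv=vx\}=V^B$ because $x$ generates $B$ (for $k[x,x^{-1}]$ one checks that centralizing $x$ forces centralizing $x^{-1}$). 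Under the same identification $\iota^\ast$ becomes the very same map $v\mapsto-[x,v]$, so $H_1(B,V)=\Ker([x,-])=H^0(B,V)$ and both isomorphisms of the lemma drop out at once. I expect this last step to be the main obstacle: for a general quasi-free $B$ the one-forms $\Om_{B|k}$ are only projective, and $\Ker(\alpha)$ need not be $V^B$ --- for a commutative coordinate ring of a smooth affine curve one has $H_1(B,B)\cong\Om_{B|k}\ot_{B^e}B=\Om_{B|k}/\Om_{B|k}^2$, the K\"ahler module $\Om^1_{B/k}$, a rank-one projective that fails to be free exactly when the cotangent bundle is nontrivial, while $V^B=B$. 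Hence the clean equality $H_1\cong V^B\cong H^0$ rests on the rank-one freeness $\Om_{B|k}\cong B^e$ (a dimension-one Calabi--Yau condition), which I would either fold into the hypotheses or verify for each smooth factor entering the applications.
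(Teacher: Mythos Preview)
Your approach---use the length-one projective resolution $0\to\Om_{B|k}\to B^e\to B\to 0$ and read off $H_1$ and $H^0$ from its ends---is exactly the paper's. The paper's proof is three lines: it identifies $H_1(B,V)$ with the kernel of $V\ot_{B^e}\ker(\mu_B)\to V$ and asserts that the claim ``follows from $\ker(\mu_B)$ being generated, as a $B^e$-module, by the elements $1\ot x-x\ot 1$.'' That last step is precisely the gap you put your finger on: generation of $\Om_{B|k}$ by universal differentials controls the \emph{image} of $\alpha$ (hence $H_0\cong V_B$), not its \emph{kernel}. Your curve example is apt; an even sharper one is $B=k\{x,y\}$, which is quasi-free with $\Om_{B|k}\cong (B^e)^2$, and for $V=B$ one has $(1,0),(0,1)\in\Ker\alpha$ while $V^B=Z(B)=k$.

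So you have not so much failed to prove the lemma as correctly diagnosed that the lemma, as stated for arbitrary smooth $B$, is false, and that the paper's proof does not close this gap either. Your argument for $B=k[x]$ and $B=k[x,x^{-1}]$ via the explicit free rank-one resolution $B^e\xrightarrow{x\ot1-1\ot x}B^e\to B$ is clean and correct, and these are the only smooth factors the paper ever feeds into Theorem~\ref{SmoothExtensions}. Your suggestion to fold the rank-one freeness (equivalently, that $B$ have a single algebra generator) into the hypothesis is the honest repair.
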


\begin{proof}
  Since $B$ is smooth
\[ 
0 \to \ker(\mu_B)\to B\otimes B \to B \to 0 
\] 
is a $B^e$-projective resolution of $B$.  Then we immediately see that
\[ 
H_1(B,V)\cong \ker\left(V\otimes_{B^e}\ker(\mu_B)\to V\right). 
\] 
The claim then follows from $\ker(\mu_B)$ being generated, as a $B^e$-module, by the elements of the form $1\otimes x - x\otimes 1\in B^e$.
\end{proof}

\begin{theorem}\label{SmoothExtensions}
  Let $A$ and $B$ be two algebras with $B$ being smooth, and let $R\colon B\otimes A\to A\otimes B$ be an invertible distributive law. Then, for any $A\#_R B$-bimodule $V$, 
  \[ 
H_n(A\#_R B,V) \cong H_n(\CH_*(A,V)_B)\oplus H_{n-1}(\CH_*(A,V)^B)
  \]
  for all $n\geq 0$. 
\end{theorem}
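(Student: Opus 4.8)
The plan is to apply Theorem~\ref{thm:HochschildOfSmashProducts} with the roles arranged so that the smooth algebra $B$ is the ``inner'' algebra, and then exploit the fact that a smooth $B$ has Hochschild homological dimension $1$ to force the spectral sequence to live on only two rows. Concretely, I would start from the second spectral sequence of Theorem~\ref{thm:HochschildOfSmashProducts},
\[
H_{p+q}(A\#_R B, V) \Leftarrow {}'E^1_{p,q} = H_p(B, \CH_q(A,V)),
\]
and observe that since $B$ is smooth, the two-term $B^e$-projective resolution $0\to\ker(\mu_B)\to B^e\to B\to 0$ guarantees that $H_p(B,W)=0$ for all $p\geq 2$ and every $B$-bimodule $W$. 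Hence the only nonzero rows are $p=0$ and $p=1$, and the spectral sequence is concentrated in a horizontal strip of height two.

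Next I would identify the two surviving rows explicitly. For $p=0$ we have ${}'E^1_{0,q}=H_0(B,\CH_q(A,V))=\CH_q(A,V)_B$, using the notation~\eqref{eqn-H_0}. For $p=1$, Lemma~\ref{lem:aux} applies to the $B$-bimodule $W=\CH_q(A,V)$ and gives ${}'E^1_{1,q}=H_1(B,\CH_q(A,V))\cong \CH_q(A,V)^B$. Thus the $E^1$-page is the double complex with bottom row $\CH_*(A,V)_B$ and top row $\CH_*(A,V)^B$, each carrying the induced Hochschild $b$-differential in the $q$-direction. The point to check here is that the horizontal differentials ${}'d^1\colon {}'E^1_{p,q}\to {}'E^1_{p-1,q}$ are compatible with these identifications, and in particular that the isomorphisms of Lemma~\ref{lem:aux} are natural enough in the coefficient bimodule that they assemble into a map of complexes as $q$ varies; this is the one place where I would need to be slightly careful rather than purely formal.

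Now I would compute the $E^2$-page. Because only two rows are nonzero, the only possibly nonzero $d^1$ differentials run from the top row ($p=1$) to the bottom row ($p=0$), that is ${}'d^1\colon \CH_q(A,V)^B\to \CH_q(A,V)_B$. The crucial claim is that this induced horizontal differential vanishes. I would argue this by tracing the connecting map in the two-term resolution: the horizontal differential is, up to sign, induced by the inclusion $V^B=H^0\hookrightarrow V\twoheadrightarrow V_B=H_0$, and the composite $V^B\to V\to V_B$ is precisely the map sending an invariant class to its coinvariant class, which one checks vanishes on the level of the reduced complex coming from the generators $1\otimes x-x\otimes 1$ of $\ker(\mu_B)$. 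Granting ${}'d^1=0$, the $E^2$-page equals the $E^1$-page, namely $H_q(\CH_*(A,V)_B)$ in row $p=0$ and $H_q(\CH_*(A,V)^B)$ in row $p=1$.

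Finally I would argue that the spectral sequence degenerates at $E^2$. Since the nonzero terms occupy only the rows $p=0,1$, every higher differential $d^r$ with $r\geq 2$ has either source or target in a zero row (as $d^r$ shifts $p$ by $r\geq 2$), so all higher differentials vanish and $E^2=E^\infty$. The abutment $H_n(A\#_R B,V)$ then has a two-step filtration with associated graded pieces ${}'E^\infty_{0,n}=H_n(\CH_*(A,V)_B)$ and ${}'E^\infty_{1,n-1}=H_{n-1}(\CH_*(A,V)^B)$. To conclude the \emph{direct sum} decomposition asserted in the statement, rather than merely a short exact sequence, I would invoke the characteristic-zero hypothesis on $k$ together with the splitting coming from the fact that $V^B\to V\to V_B$ already exhibits the top-row homology as a direct summand; this splitting of the filtration is the main obstacle and the step I expect to require the most care, since a priori a two-row spectral sequence yields only an extension. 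I anticipate that the same idempotent-type averaging or the naturality of the Lemma~\ref{lem:aux} isomorphism that kills $d^1$ will also furnish the desired splitting, giving
\[
H_n(A\#_R B,V)\cong H_n(\CH_*(A,V)_B)\oplus H_{n-1}(\CH_*(A,V)^B)
\]
for all $n\geq 0$.
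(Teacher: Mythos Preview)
Your overall plan is the paper's: run the primed spectral sequence of Theorem~\ref{thm:HochschildOfSmashProducts}, use smoothness of $B$ to confine ${}'E^1_{p,q}$ to the two columns $p\in\{0,1\}$, identify those columns via~\eqref{eqn-H_0} and Lemma~\ref{lem:aux}, and read off the answer at $E^2$. Where you deviate is in your account of $d^1$, and this is a genuine confusion rather than an alternative argument.

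The page ${}'E^1_{p,q}=H_p(B,\CH_q(A,V))$ is already the homology of the bicomplex of Proposition~\ref{prop:diagonal} taken in the $p$-direction (the $B$-direction). Consequently $d^1$ is induced by the \emph{remaining} differential, the Hochschild $b$-differential of $A$ acting in $q$: one has ${}'d^1\colon {}'E^1_{p,q}\to {}'E^1_{p,q-1}$, not ${}'E^1_{p,q}\to {}'E^1_{p-1,q}$. There is thus no ``horizontal $d^1$ from the top row to the bottom row'' whose vanishing you must establish; the paragraph you devote to arguing that the composite $V^B\hookrightarrow V\twoheadrightarrow V_B$ induces zero is aimed at a map that does not occur on this page. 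The passage from $E^1$ to $E^2$ is simply taking $q$-homology of each column separately, which is exactly how the paper obtains
\[
{}'E^2_{0,q}=H_q\bigl(\CH_\ast(A,V)_B\bigr),\qquad {}'E^2_{1,q}=H_q\bigl(\CH_\ast(A,V)^B\bigr)
\]
in one line. This also dissolves the internal inconsistency in your sentence ``Granting $d^1=0$, the $E^2$-page equals the $E^1$-page, namely $H_q(\ldots)$'': the $E^2$-page is not equal to $E^1$, it is its $q$-homology. The only legitimate verification at this step is the one you do flag, namely the naturality of Lemma~\ref{lem:aux} in the coefficient bimodule, so that $b$ really restricts to $\CH_\ast(A,V)^B$. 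Note also that your subsequent claim ``$d^r$ shifts $p$ by $r\geq 2$'' rests on the same mis-indexing; the paper, for its part, does not spell out the degeneration step explicitly.

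On the splitting: your caution is reasonable in principle, but no averaging or characteristic-zero trick is needed. A two-column spectral sequence yields, for each $n$, a short exact sequence of $k$-vector spaces, and since $k$ is a field this splits. That is all the isomorphism in the statement asserts.
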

\begin{proof}
  In view of the smoothness of $B$, Theorem~\ref{thm:HochschildOfSmashProducts} implies that the 1st page $'E^1_{p,q}$ of the spectral sequence of Theorem \ref{thm:HochschildOfSmashProducts} consists of two columns at $p=0$ and $p=1$.  Then by \eqref{eqn-H_0} 
\[ 
'E^2_{0,q} = H_q(H_0(B,\CH_\ast(A,V))) \cong H_q(\CH_\ast(A,V)_B) ,
\]
and by Lemma~\ref{lem:aux}
\[ 
'E^2_{1,q} = H_q(H_1(B,\CH_\ast(A,V)))\cong H_q(\CH_\ast(A,V)^B) 
\]
as we wanted to show.
\end{proof}

\section{Computations}\label{sect:calculation}

\subsection{Galois extensions}\label{Galois-ext}~

Let $K/k$ be a finite Galois extension, and let $G$ be the Galois group of this extension. Accordingly, we have the $K$-algebra $K\#G$, and from Theorem~\ref{thm:amenable} we obtain
\[
H_n(K\# G,K) = H_n(K)^G. 
\] 
The homology of $K$, on the other hand, is given by  
\[
HH_n(K) = \begin{cases}
K & \text{ if } n= 0, \\
0 & \text{ otherwise},
\end{cases}
\]
when regarded as a $K$-algebra. Finally $G$ being the Galois group of the extension $K/k$, we have $K^G=k$. Hence,
\[ 
H_n(K\# G,K) =
  \begin{cases}
    k & \text{ if } n=0,\\
    0 & \text{ otherwise.}
  \end{cases}
\]

\subsection{Group rings with faithful characters}\label{subsect-group-ring}~

Let $G$ be a group and let $\sigma\colon G\to k^\times$ be a character. Let us define
\begin{equation}\label{R-distr-law}
  R\colon k[x,x^{-1}]\otimes k[G]\to k[G]\otimes k[x,x^{-1}], \qquad  R(x^n\otimes g) := \sigma(g)^n g\otimes x^n 
\end{equation}
for any $n\geq 0$, and any $g\in G$. Then \eqref{R-distr-law} obeys the diagram \eqref{DistributiveLaw}, and we have the smash biproduct algebra $k[G]\# k[x,x^{-1}]$.  Now, since $k[x,x^{-1}]$ is smooth, we have by Theorem \ref{SmoothExtensions}
\begin{align*}
H_n(k[G]\# k[x,x^{-1}])
  \cong & H_n\Big(\CH_\ast(k[G],k[G]\# k[x,x^{-1}])_{k[x,x^{-1}]}\Big)\\
    & \oplus H_{n-1}\Big(\CH_\ast(k[G],k[G]\# k[x,x^{-1}])^{k[x,x^{-1}]}\Big).
\end{align*}
On the other hand,
\[ \Big[x, g_1\otimes\cdots\otimes g_n\ot (g_0\ot x^{\ell-1})\Big] =
  (\sigma(g_0g_1\cdots g_n) - 1)g_1\otimes\cdots\otimes g_n\ot (g_0\ot x^{\ell} )\]
Now, assume $\sigma$ is faithful, \ie $\sigma(g) = 1$ if and only if $g=e$.  In
view of the faithfulness of the character we get
\[ \CH_\ast(k[G],k[G]\# k[x,x^{-1}])_{k[x,x^{-1}]} = 0, \] 
whereas
\[
\CH_\ast(k[G],k[G]\# k[x,x^{-1}])^{k[x,x^{-1}]} \cong \CH_\ast^{(e)}(k[G])\otimes k[x,x^{-1}],
\]
where
\begin{align*}
  \CH_n^{(e)} (k[G])
  := & \left<g_1 \odots g_n \ot g_0 \in  \CH_n(k[G])\mid g_1\cdots g_ng_0 = e\right>.
\end{align*}
In order to compute the homology of this subcomplex we shall need the following lemma which
goes back to Eilenberg and MacLane~\cite{EilenbergMacLane:MacLaneHomology}.
(See~\cite{SiegelWitherspoon:Hochschild} for a history of Hochschild (co)homology of group
algebras.)

\begin{lemma}\label{technical2}
  Let $G$ be an arbitrary group and consider $\CH_*(k[G])$.  Now, consider the subcomplex
  $\CH^{(e)}_*(k[G])$ generated by homogeneous tensors of the form
  $g_0\otimes\cdots\otimes g_n$ such that the product $g_0\cdots g_n$ is the unit element.
  Then
  \[ H_\ast(\CH_\ast^{(e)}(k[G]) \cong  H_\ast(k[G],k). \]
\end{lemma}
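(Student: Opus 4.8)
The plan is to exhibit an explicit isomorphism of chain complexes between $\CH^{(e)}_*(k[G])$ and the Hochschild complex $\CH_*(k[G],k)$ computing $H_*(k[G],k)$, where $k$ carries the trivial $k[G]$-bimodule structure (both actions via the augmentation $g\mapsto 1$). The key observation is that a homogeneous generator $g_0\ot g_1\odots g_n$ of $\CH^{(e)}_n(k[G])$ is constrained by $g_0g_1\cdots g_n=e$, so that the coefficient $g_0=(g_1\cdots g_n)^{-1}$ is completely determined by the remaining entries. Hence the forgetful assignment
\[
\Phi\colon \CH^{(e)}_n(k[G])\lra \CH_n(k[G],k)=k\ot k[G]^{\ot n},\qquad g_0\ot g_1\odots g_n\longmapsto 1\ot g_1\odots g_n,
\]
is a bijection on the distinguished bases, hence a linear isomorphism in each degree.

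First I would check that $\CH^{(e)}_*(k[G])$ is indeed a subcomplex, \ie that the Hochschild differential \eqref{Hochschild-diff} preserves the condition that the product of all group-element entries equals $e$. For the inner faces $g_j\ot g_{j+1}\mapsto g_jg_{j+1}$ and for the first face $g_0g_1\ot g_2\odots g_n$ this is immediate, since multiplying two adjacent entries leaves the total product unchanged. For the last face $(-1)^n g_ng_0\ot g_1\odots g_{n-1}$ one uses the relation itself: from $g_0\cdots g_n=e$ we obtain $g_0g_1\cdots g_{n-1}=g_n^{-1}$, whence $(g_ng_0)g_1\cdots g_{n-1}=e$, so this term too lies in $\CH^{(e)}_{n-1}(k[G])$.

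Next I would verify that $\Phi$ is a chain map. Applying $\Phi$ to $b(g_0\ot g_1\odots g_n)$ simply replaces each resulting coefficient by $1$ and records the surviving tensor; on the other side, the differential on $\CH_*(k[G],k)$ uses the trivial actions $1\cdot g_1=1$ and $g_n\cdot 1=1$, so that its outer faces read $1\ot g_2\odots g_n$ and $(-1)^n 1\ot g_1\odots g_{n-1}$, matching term by term the images under $\Phi$ of the corresponding Hochschild faces (the inner faces agreeing tautologically). Thus $\Phi\circ b=b\circ\Phi$, and being a degreewise isomorphism, $\Phi$ induces the claimed isomorphism $H_*(\CH^{(e)}_*(k[G]))\cong H_*(k[G],k)$. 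As an aside, $\CH_*(k[G],k)$ is precisely the inhomogeneous bar complex computing the group homology $H_*(G,k)$, which is the form in which this identification is usually invoked.

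The only point requiring genuine care — rather than bookkeeping — is the behaviour of the outer (cyclic) face $(-1)^n g_ng_0\ot\cdots$: it is here, and only here, that the defining constraint $g_0\cdots g_n=e$ must actively be used, both to see that $\CH^{(e)}_*$ is closed under $b$ and to identify this term with the trivial-coefficient face $(-1)^n 1\ot g_1\odots g_{n-1}$. Once this face is dealt with, the remaining verifications are formal, and the isomorphism of complexes — hence of their homologies — follows.
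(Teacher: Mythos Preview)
Your proof is correct and follows essentially the same approach as the paper: the paper writes down the explicit inverse of your map $\Phi$, namely $g_1\otimes\cdots\otimes g_n \mapsto g_1\otimes\cdots\otimes g_n\otimes g_n^{-1}\cdots g_1^{-1}$, and simply asserts it is an isomorphism of complexes. Your version is more detailed, verifying explicitly that $\CH^{(e)}_*(k[G])$ is a subcomplex and that $\Phi$ is a chain map; one small remark is that the constraint $g_0\cdots g_n=e$ is needed only for bijectivity and for closure under the last face, not for the chain-map identity itself (your $\Phi$ actually extends to a chain map on all of $\CH_*(k[G])$).
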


\begin{proof}
  The morphism of complexes $\CH_n(k[G],k)\to \CH_n^{(e)}(k[G],k[G])$, given by
  \[ g_1\otimes\cdots\otimes g_n \mapsto  g_1\otimes\cdots\otimes g_n \ot  g_n^{-1}\cdots g_1^{-1}, \]
  is an isomorphism.
\end{proof}

Accordingly, we have
\[
H_\ast\Big(\CH_\ast(k[G],k[G]\# k[x,x^{-1}])^{k[x,x^{-1}]} \Big) \cong H_\ast(k[G],k)\otimes k[x,x^{-1}],
\]
and hence
\[
 HH_n(k[G]\# k[x,x^{-1}]) \cong H_{n-1}(G,k)\otimes k[x,x^{-1}]. 
 \] 
 Let $G$ be a finitely generated abelian group of the form $G = G^f\times G^t$ where $G^t$
 is the maximal torsion subgroup and $G^f$ is the maximal torsion-free subgroup of $G$.
 Since $H_n(G^t,k)=0$ for $n\geq 1$, we see that the higher homology is determined $G^{f}$.
 Let $a$ be the free rank of $G$.  Then
\[ HH_n(k[G]\# k[x,x^{-1}]) \cong k^{\binom{a}{n-1}}\otimes k[x,x^{-1}]. \] 

\subsection{Smash products of Lorentz polynomials}\label{subsect-quantum-tori}~

Below, we shall consider the Hochschild homology of the smash products of polynomial algebra $k[x]$ and the Laurent polynomial algebra $k[x,x^{-1}]$ in various combinations, but always with the particular distributive law given by
\[ 
R(y^j\otimes x^i) = q^{ij} x^i\otimes y^j 
\] 
for some fixed $q\in k^\times$ which is not a root of unity. 

% In each of the cases we shall make use of Theorem \ref{SmoothExtensions}, that is,
% \[ 
% HH_n(A\#_R B) = H_n\Big(\CH_\ast\big(A,A\#_R B\big)_B\Big) \oplus H_{n-1}\Big(\CH_\ast\big(A,A\#_R B\big)^B\Big). 
% \] 
% We shall also keep \cite[Ex. III.1.1]{Brown:CohomologyOfGroups} in mind, that is, for $C=k[z,z^{-1}]$
% \[
% H_0(C,M) \cong M_C, \qquad H_1(C,M) \cong M^C, \qquad H_n(C,M) = 0, \quad n > 1,
% \]
% for any $C$-bimodule $M$.

\subsubsection{The quantum plane}~

%In view of \cite[Ex. III.1.1]{Brown:CohomologyOfGroups}, we have
%\[
%H_0(A,M) = M_A, \qquad H_1(A,M) = M^A, \qquad H_n(A,M) = 0, \quad n > 1,
%\]
%for any $A$-bimodule $M$.

Let us begin with $A=k[x]$ and $B=k[y]$. In this case, for any $x^{i_1} \odots x^{i_m}  \ot x^i y^j \in \CH_m(A,A\#_R B)$, we have
\begin{align*}%\label{eqn-commutation-CH-y}
  \left[y^j,x^{i_1} \odots x^{i_m}  \ot x^i \right]
  % = & y^j\left(x^{i_1} \odots x^{i_m}  \ot x^i \right)  - \left(x^{i_1} \odots x^{i_m}  \ot x^i\right)y^j \nonumber\\
  = & \left(q^{(i_1+\ldots +i_m + i)j}-1\right)x^{i_1} \odots x^{i_m}  \ot x^i y^{j},
\end{align*}
that is,
\[ \CH_m(A,A\#_R B)_B \cong \CH_m(k,B/k) \oplus \CH_m(A) \]
whereas,
\[
\CH_m(A,A\#_R B)^B \cong\CH_m(k,B).
\]
Accordingly, we see that
\begin{align*}
  HH_n(A\#_R B)
  \cong & H_n(\CH_*(A,A\#_R B)_B) \oplus H_{n-1}(\CH_*(A,A\#_R B)^B) \\
  % = & \begin{cases}
  %   A\oplus B & \text{ if } n=0\\
  %   \Omega_{A}\oplus B & \text{ if } n=1\\
  %   0 & \text{ if } n\geq 2
  % \end{cases}\\
 \cong & \begin{cases}
   k[x]\oplus yk[y] & \text{ if } n=0,\\
   k[x]\oplus k[y]  & \text{ if } n=1,\\
   0                & \text{ if } n\geq 2.
   \end{cases}
\end{align*}

\subsubsection{The quantum cylinder}~

We now consider $A=k[x]$ and $B=k[y,y^{-1}]$. In this case, for any $x^{i_1} \odots x^{i_m}  \ot x^i y^j \in \CH_m(A,A\#_R B)$ so that $i_1+\ldots +i_m + i \neq 0$, we may choose $j_1,j_2 \in \B{Z}$ such that $j_1+j_2 = j$ and that $j_1 \neq 0$ to get
\begin{align*}%\label{eqn-commutation-CH-y}
  \left[y^{j_1},x^{i_1} \odots x^{i_m}  \ot x^iy^{j_2} \right]
  %= &  y^{j_1}\left(x^{i_1} \odots x^{i_m}  \ot x^i y^{j_2}\right)  - \left(x^{i_1} \odots x^{i_m}  \ot x^i y^{j_1}\right)y^{j_2} \nonumber\\
  = & \left(q^{(i_1+\ldots +i_m + i)j_1}-1\right)x^{i_1} \odots x^{i_m}  \ot x^i y^{j} ,
\end{align*}
that is,
\[
\CH_m(A,A\#_R B)_B \cong \CH_m(k,B) \cong \CH_m(A,A\#_R B)^B.
\]
Accordingly, we see that
% \[
% HH_0(A\#_R B) \cong H_0\left(\CH_\ast(A,A\#_R B)_B\right) \cong H_0(k,B) \cong k[y,y^{-1}],
% \]
% that
% \[
% HH_1(A\#_R B) \cong H_1(k,B) \oplus H_0(k, B) \cong 0 \oplus B \cong k[y,y^{-1}],
% \]
% and that
% \[
% HH_2(A\#_R B) \cong H_2(k,B) \oplus H_1(k, B) \cong 0 \oplus 0.
% \]
% As a result,
\[ 
HH_n(A\#_R B) \cong
  \begin{cases}
    k[y,y^{-1}]& \text{ if } n=0,1,\\
    0 & \text{ if } n\geq 2.
  \end{cases}
\]

\subsubsection{The quantum torus}~

We continue with $A=k[x,x^{-1}]$ and $B=k[y,y^{-1}]$. In this case, for any $x^{i_1} \odots x^{i_m}  \ot x^i y^j \in \CH_m(A,A\#_R B)$ so that $i_1+\ldots +i_m + i \neq 0$, we may choose $j_1,j_2 \in \B{Z}$ so that $j_1+j_2 = j$ and that $j_1 \neq 0$ to get
\begin{align*}%\label{eqn-commutation-CH-y}
  \left[y^{j_1},x^{i_1} \odots x^{i_m}  \ot x^iy^{j_2} \right]
   % = & y^{j_1}\left(x^{i_1} \odots x^{i_m}  \ot x^i y^{j_2}\right)  - \left(x^{i_1} \odots x^{i_m}  \ot x^i y^{j_1}\right)y^{j_2}  \nonumber\\
  = & \left(q^{(i_1+\ldots +i_m + i)j_1}-1\right)x^{i_1} \odots x^{i_m}  \ot x^i y^{j} ,
\end{align*}
that is,
\[
  \CH_m(A,A\#_R B)_B = \CH_m(A,A\#_R B)^B
  \cong \left< x^{i_1} \odots x^{i_m}  \ot x^iy^j \mid i_1 + \ldots + i_m + i = 0\right>,
\]
% which means
% \[ \CH_*(A,A\#_R B)^B \cong \CH_*^{(0)}(A)\otimes B \cong \CB_*(k,k[\B{Z}],k[\B{Z}]^{ad})\otimes B \]
% where we have
% \[ H_n(\CH_*(A,A\#_R B)^B) = H_n(\B{Z},k)\otimes A\#_R B = 
%   \begin{cases}
%     A\#_R B & \text{ if } n=0,1\\
%     0 & \text{ otherwise} 
%   \end{cases}
% \]
Just like we did in Lemma we next identify the above complexes with the complex computing the homology of the group algebra $k[\B{Z}]$ with coefficients in $k[y,y^{-1}]$ via
\begin{align*}
  \CH_n(A,A\#_R B)_B
  \cong & \CH_n(A,A\#_R B)^B \to \CH_n(k[\B{Z}],k[y,y^{-1}]), \\
& x^{\ell_1} \odots x^{\ell_n} \ot \left(x^{-\ell_1 - \ldots -\ell_n} \ot y^j\right) \mapsto \ell_1 \odots \ell_n \ot y^j,
\end{align*}
where the right $\B{Z}$-module structure on $k[y,y^{-1}]$ is given by $y^j \lt i := q^{ij}$, and the left $\B{Z}$-module structure is trivial. As such,
\begin{align*} 
& HH_n(A\#_R B) \cong H_n\Big(\CH_\ast(A,A\#_R B)_B\Big) \oplus H_{n-1}\Big(\CH_\ast(A,A\#_R B)^B\Big) \cong \\
&\hspace{3cm} H_n(k[\B{Z}],k[y,y^{-1}]) \oplus H_{n-1}(k[\B{Z}],k[y,y^{-1}]).
\end{align*}
It is then evident, in view of \cite[Ex.III.1.1]{Brown:CohomologyOfGroups}, that
\[ 
HH_n(A\#_R B) \cong
  \begin{cases}
    k & \text{ if } n=0,\\
    k \oplus k    & \text{ if } n=1,\\
    0 & \text{ if } n\geq 2.
  \end{cases}
\]

\subsection{The multiparametric affine space}\label{subsect-multip-aff-sp}~

The multiparametric affine space $S(X_\nu,\Lambda)$ defined in \cite[Subsect. 3.1]{GuccGucc97}
is the algebra generated by $X_\nu:=\{x_1, \ldots, x_\nu\}$ subject to the relations
\begin{equation}\label{QuantumPlane1}
  x_jx_i = q_{i,j}x_ix_j,
\end{equation}
where for $1 \leq i,j \leq \nu$, and the structure constants $\Lambda:=(q_{i,j})$ form a set
of nonzero elements in $k$ satisfying
\begin{equation}\label{QuantumPlane2}
q_{i,i} = 1, \qquad q_{i,j}q_{j,i} = 1,
\end{equation}
for all $i < j$. For the sake of simplicity, we are going to drop $\Lambda$ from the notation.

Now, we can write $S(X_\nu)$ as an iterated sequence of smash biproducts
\[ S(X_\nu) = \underbrace{k[x_1]\# \cdots \# k[x_n]}_{\text{$\nu$-times}} = S(X_{\nu-1})\#
  k[x_\nu] \] In view of Theorem \ref{SmoothExtensions}, we have
\begin{align*}
 HH_n(S(X_\nu))
  \cong  & H_n\Big(\CH_\ast\big(S(X_{\nu-1}),S(X_\nu)\big)_{k[x_\nu]}\Big)
         \oplus H_{n-1}\Big(\CH_\ast\big(S(X_{\nu-1}),S(X_\nu)\big)^{k[x_\nu]}\Big).
\end{align*}

\subsubsection{The free case}~

Now, let us assume for $1 \leq i,j \leq \nu$ that $\Lambda$ generates a free abelian group of rank $\nu(\nu-1)/2$ in $k^\times$. Then,
\[
\CH_\ast\big(S(X_{\nu-1}),S(X_\nu)\big)_{k[x_\nu]} \cong \CH_\ast\big(S(X_{\nu-1}),S(X_{\nu-1})\big),
\]
and
\[
\CH_\ast\big(S(X_{\nu-1}),S(X_\nu)\big)^{k[x_\nu]} \cong \CH_\ast\big(k,k[x_\nu]\big).
\]
It is then immediate to see that
\[
  HH_n(S(X_\nu)) \cong
  \begin{cases}
    k\oplus \bigoplus_{i=1}^\nu x_ik[x_i] & \text{ if } n=0, \\
    \bigoplus_{i=1}^\nu\,k[x_i] \ot x_i   & \text{ if } n=1, \\
    0                                    & \text{ if } n\geq 2,
\end{cases}
\]
which is the case $r=0$ in \cite[Thm. 4.4.1]{Richard:QuantumWeyl}.

\subsubsection{The non-free case}~

Hochschild complex admits the $\B{N}^\nu$-grading on $S(X_\nu,\Lambda)$ where we have
\[
HH_\ast(S(X_\nu)) = \bigoplus_{(m_1,\ldots,m_\nu) \in \B{N}^\nu}\,HH_\ast^{(m_1,\ldots,m_\nu)}(S(X_\nu)),
\]
The same grading applies to both $\CH_\ast\big(S(X_{\nu-1}),S(X_\nu)\big)_{k[x_\nu]}$ and
$\CH_\ast\big(S(X_{\nu-1}),S(X_\nu)\big)^{k[x_\nu]}$.  Then it suffices to consider only the
components with multi-degree $(m_1,\ldots,m_\nu) \in \B{N}^\nu$.

Now, consider the cone $C$ in the lattice $ \B{N}^\nu$ of elements
$(m_1,\ldots,m_\nu) \in \B{N}^\nu$ with the property that
\begin{equation}\label{eqn-prod}
  \prod_{i=1}^\nu \, q_{j,i}^{m_i} = 1
\end{equation}
for every $1\leq j\leq \nu$. This translates into
\begin{equation}\label{eqn-commutation}
  x_j \cdot \left(x_1^{m_1}\ldots x_\nu^{m_\nu}\right)
  = \left(x_1^{m_1}\ldots x_\nu^{m_\nu}\right)\cdot x_j
\end{equation}
for $1\leq j\leq \nu$.  Hence, it follows at once from \eqref{eqn-prod} and
\eqref{eqn-commutation} that
\[
  \dim_k HH_n^{(m_1,\ldots,m_\nu)}(S(X_\nu)) =
  \begin{cases}
    \binom{h(m_1,\ldots,m_\nu)}{n} & \text{ if } (m_1,\ldots,m_\nu)\in C\\
    0               & \text{ otherwise,}
  \end{cases}
\]
where we define
\[ h(m_1,\ldots,m_\nu) = \# \{ m_i\mid m_i > 0 \}, \] i.e. it counts the number of $m_i>0$.
Our result agrees with \cite[Thm. 3.1.1]{GuccGucc97}, and also with \cite[Thm. 6.1]{Wamb93}.

\subsection{Quantum complete intersections}\label{subsect-quant-comp-int}~

Given two integers $a,b\geq 2$, and $q \in k$ which is not a root of unity; let $C_{a,b}$ be
the quotient of the algebra $k\{x,y\}$ generated by two noncommuting indeterminates divided by
the two sided ideal given by the relations
\begin{equation}
  x^a,\quad yx-qxy, \quad y^b
\end{equation}
as given in \cite{BerErd08}. We note that the choice of
$a=b=2$ yields the quantum exterior algebra \cite{Bergh:QuantumExteriorAlgebras}. A linear
basis of this algebra is given by monomials of the form $x^iy^j$ where $0\leq i < a$ and
$0\leq j < b$.

Now, consider the truncated polynomial algebras $T_a := k[x]/\langle x^a\rangle$, and
$T_b := k[y]/\langle y^b\rangle$ determined by $a,b\geq 2$.  Then
\[
R:T_b\ot T_a \to T_a \ot T_b; \qquad R(y^j\otimes x^i) := q^{ij}x^i\otimes y^j,
\]
an invertible distributive law.  Thus we have $C_{a,b} \cong T_a \#_R T_b$.

The Hochschild homology of a truncated polynomial algebra $T_a:= k[x]/\left<x^a\right>$ has
the periodic resolution $(P_*,\p_*)$ where $P_n = T_a\otimes T_a$ and
% \[
% \xymatrix{
% \ldots \ar[r] & T_a \ot T_a \ar[rr]^{\sum_{i=0}^{a-1}x^i\otimes x^{a-1-i}}& & T_a \ot T_a \ar[rr]^{1\ot x - x \ot 1} & &T_a \ot T_a \ar[rr]^{\sum_{i=0}^{a-1}x^i\otimes x^{a-1-i}}& & T_a \ot T_a \ar[rr]^{1\ot x - x \ot 1} && T_a \ot T_a \ar[r] & 0
% }
% \]
\begin{equation*}
  \partial_n =
  \begin{cases}
    1\otimes x- x\otimes 1 & \text{ if $n$ is odd,}\\
    \displaystyle\sum_{i=0}^{a-1} x^i\otimes x^{a-1-i} & \text{ if $n$ is even.}
  \end{cases}
\end{equation*}
for $n\geq 1$ \cite[Sect. 5.9]{Kassel:CohomologyOfAssociativeAlgebras}, see
also~\cite[E.4.1.8]{Loday-book}. Tensoring (over $T_a \ot T_a$) with a $T_a$-bimodule $U$,
then $H_n(T_a,U)$ appears as the homology of the complex
\begin{equation}\label{eqn-truncated-poly-alg-comp}
\xymatrix{
\cdots \ar[r] & U \ar[r]^{\p_4} & U \ar[r]^{\p_3}  &U \ar[r]^{\p_2} & U \ar[r]^{\p_1} & U \ar[r] & 0,
}
\end{equation}
where for $n \geq 1$,
\begin{equation}\label{eqn-truncated-poly-alg-diff}
  \p_n(u) = 
  \begin{cases}
    xu-ux & \text{ if $n$ is odd,} \\
    \displaystyle\sum_{i=0}^{a-1}x^iu x^{a-1-i} & \text{ if $n$ is even.}
\end{cases}
\end{equation}
More precisely, for any $m\geq 1$ and
$x^{i_1}\odots x^{i_m}\ot x^iy^j \in \CH_m(T_a,T_a\#_R T_b)$ we have
% \begin{align*}
%   \p_{\rm odd}\left(x^{i_1}\odots x^{i_m}\ot x^iy^j\right)
%   = & y\left(x^{i_1}\odots x^{i_m}\ot x^iy^j\right) - \left(x^{i_1}\odots x^{i_m}\ot x^iy^j\right)y  \\
%   = & \left(q^{i_1+\cdots+i_m + i} - 1\right)\,x^{i_1}\odots x^{i_m}\ot x^iy^{j+1},
% \end{align*}
% whereas
% \begin{align*}
%   \p_{\rm even}\left(x^{i_1}\odots x^{i_m}\ot x^iy^j\right)
%   = & \sum_{s=0}^{b-1}y^s\left(x^{i_1}\odots x^{i_m}\ot x^iy^j\right)y^{b-1-s} \\
%   = & \sum_{s=0}^{b-1}\, q^{(i_1+\cdots+i_m + i)s} \,x^{i_1}\odots x^{i_m}\ot x^iy^{b-1+j}.
% \end{align*}
\begin{align*}
  \p_n\left(x^{i_1}\odots x^{i_m}\ot x^iy^j\right) =
  \begin{cases}
    \left(q^{i_1+\cdots+i_m + i} - 1\right)\,x^{i_1}\odots x^{i_m}\ot x^iy^{j+1} & \text{ if $n$ is off,}\\
    \sum_{s=0}^{b-1}\, q^{(i_1+\cdots+i_m + i)s} \,x^{i_1}\odots x^{i_m}\ot x^iy^{b-1+j} & \text{ if $n$ is even.}
  \end{cases}
\end{align*}
We now see that $HH_q(T_a) \cong k^{a-1}$ for $q>0$, as in \cite[Prop. 5.4.15]{Loday-book}
Accordingly, by Theorem~\ref{thm:HochschildOfSmashProducts} we get
\[
  H_p(T_b,\CH_\ast(T_a,C_{a,b})) =\begin{cases}
    \CH_\ast(k,T_b/k)\oplus \CH_\ast(T_a), & \text{ if } p=0, \\
    \CH_\ast(k,T_b/\langle y^{b-1}\rangle) & \text{ if $p$ is odd,} \\
    \CH_\ast(k,T_b/k) & \text{ if $p>0$ is even.}
\end{cases} 
\]
Then the 1st page of the spectral sequence~\eqref{eqn-E1} appears as
\[ HH_\ast(C_{a,b}) \Leftarrow {}'E^1_{p,q} \cong
  \begin{cases}
    \CH_q(k,T_b/k)\oplus \CH_q(T_a) & \text{ if } p=0,\\
    \CH_q(k,T_b/\langle y^{b-1}\rangle) & \text{ if $p$ is odd,} \\
    \CH_q(k,T_b/k)& \text{ if $p>0$ is even.}
  \end{cases}
\]
Finally, since $H_q(k,T_b/\langle y^{b-1}\rangle) = 0 = H_q(k,T_b/k)$ for $q > 0$, the
vertical homology yields
\[ % HH_\ast(C_{a,b}) \Leftarrow
  {}'E^2_{p,q} \cong 
  \begin{cases}
    T_b/k \oplus k^a & \text{ if } p=q=0\\
    k^{a-1} & \text{ if $p=0$ and $q>0$}, \\
    T_b/\langle y^{b-1}\rangle & \text{ if $p$ is odd and $q=0$,}\\
    T_b/k & \text{ if $p>0$ is even and $q=0$},\\
    0 & \text{ otherwise.}
  \end{cases}
\]
In other words,
\[
  \dim_k HH_n(C_{a,b})
  = \begin{cases}
    a+b-1 & \text{ if } n =0, \\
    a+b-2 & \text{ if } n \geq 1,
  \end{cases}
\]
as in \cite[Thm. 3.1]{BerErd08}. 

\subsection{The deformed (completed) Weyl algebras}\label{sect:WeylAlgebras}~

Let $\G{g}$ be a finite dimensional Lie algebra, and $V$ be a vector space. Let also $S(V)$ denote the symmetric algebra on $V$, and
$\widehat{S(V)}$ be the completed symmetric algebra over $V$, \ie the algebra of
formal power series in $\dim(V)$-many variables.  We note that the algebra $\widehat{S(V)}$ is isomorphic to the dual algebra
$S(V)^\vee$ of $S(V)$ viewed as the coalgebra of polynomials over $\dim(V)$-many commuting
variables.

Let us now recall the $(\G{g}, D)$-deformed Weyl algebras from
\cite{DuroMeljSamsSkod07,MeljSkod07}.  Given $ D:\G{g} \to \Hom(V,\widehat{S(V)})$, extending it to a morphism $ D:\G{g} \to {\rm Der}(\widehat{S(V)})$ of Lie algebras~\cite[Subsect. 1.2]{MeljSkod07}, one arrives at the Hopf-cross product,
$A_{\G{g}, D} := \widehat{S(V)} \rtimes U(\G{g})$, called the $(\G{g}, D)$-deformed (completed) Weyl algebra.

We are going to identify the (continuous) Hochschild homology of the algebra
$A_{\G{g}, D}$ with the homology of its dense subalgebra $A^{\rm pol}_{\G{g}, D} := S(V) \rtimes U(\G{g})$.

\begin{proposition}\label{Completion}
If $ D(g)(v)\in \Bbbk\left<1\right>\oplus V$, for any $g\in \G{g}$ and
any $v\in V$, then there is an isomorphism of the form
  $HH_\ast(A^{\rm pol}_{\G{g}, D}) \cong HH^{cont}_\ast (A_{\G{g}, D})$, where the right hand side refers to the continuous Hochschild homology.
\end{proposition}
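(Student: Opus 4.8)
The plan is to run the spectral sequence of Theorem~\ref{thm:HochschildOfSmashProducts}, in the Lie-algebra form of Corollary~\ref{SpecialCase}, simultaneously for $A^{\rm pol}_{\G{g}, D}=S(V)\rtimes U(\G{g})$ and for $A_{\G{g}, D}=\widehat{S(V)}\rtimes U(\G{g})$, and to compare the two along the dense inclusion $\iota\colon A^{\rm pol}_{\G{g}, D}\hookrightarrow A_{\G{g}, D}$. The hypothesis enters first to make everything well defined: since $D(g)(v)\in\Bbbk\left<1\right>\oplus V\subseteq S(V)$, each $D(g)$ preserves $S(V)$, so $A^{\rm pol}_{\G{g}, D}$ really is a subalgebra; writing $D(g)(v)=\lambda_g(v)\,1+\ell_g(v)$ with $\lambda_g\in V^\ast$ and $\ell_g\in\mathrm{End}(V)$ displays $D(g)$ as a filtered derivation for the polynomial-degree filtration, the linear part $\ell_g$ preserving each $S^n(V)$ and the constant part $\lambda_g$ lowering degree by one. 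This filtration is complete and exhaustive on $\widehat{S(V)}=\prod_n S^n(V)$, and $\iota$ is filtered.

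Applying Corollary~\ref{SpecialCase} with $A=S(V)$ (resp.\ $\widehat{S(V)}$), $B=U(\G{g})$, and coefficients in the algebra itself gives
\[
HH_{p+q}(A^{\rm pol}_{\G{g}, D})\Leftarrow {}'E^1_{p,q}=H_p\bigl(\G{g},\,\CH_q(S(V),A^{\rm pol}_{\G{g}, D})^{ad}\bigr),
\]
and its continuous counterpart for $A_{\G{g}, D}$, assembled from the completed Hochschild complex $\CH^{cont}_q(\widehat{S(V)},A_{\G{g}, D})$. The decisive point is that the outer functor $H_p(\G{g},-)$ is literally \emph{the same} Chevalley--Eilenberg functor on both sides --- the enveloping algebra $U(\G{g})$ is never completed --- so $\iota$ induces a morphism of spectral sequences, and it is enough to prove it an isomorphism on ${}'E^2$. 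As $S(V)$ and $\widehat{S(V)}$ are smooth, their Hochschild homology is computed by the Koszul complex $\Lambda^\bullet V\otimes(-)$ on the coefficient algebra, and $\iota$ is a map of such Koszul complexes; the whole statement therefore reduces to showing that $\iota$ becomes a quasi-isomorphism after applying $H_p(\G{g},-)$.

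The heart of the matter is this completion-insensitivity: passing from the direct sum $\bigoplus_n S^n(V)$ to the direct product $\prod_n S^n(V)$ must not alter the homology. Here the affine hypothesis is decisive through the constant part $\lambda$. In the combined Chevalley--Eilenberg--Koszul complex the operators coming from $\lambda_g$ are first-order constant-coefficient differential operators of the type $\sum_i\lambda_g(v_i)\,\partial_{v_i}$, and in characteristic zero each such operator admits a \emph{contracting homotopy given by formal antidifferentiation}. The crucial feature is that this homotopy is defined by exactly the same formula on $S(V)$ and on $\widehat{S(V)}$: antidifferentiation sends polynomials to polynomials and power series to power series, so these operators have the same finite-dimensional kernel and cokernel on $S(V)$ as on $\widehat{S(V)}$. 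The plan is to use the homotopy to contract away all of the complex except a finite-dimensional piece, on which the direct sum and the direct product visibly agree, and then to treat the degree-preserving linear part $\ell$ as a filtered perturbation, upgrading the associated-graded isomorphism to the genuine one by an Eilenberg--Moore convergence argument for the complete filtration $F_\bullet$.

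The two points I expect to fight are (i) convergence on the completed side, ensuring that no homology leaks out along the infinite product $\prod_n S^n(V)$ past the finite range controlled by the antiderivative homotopy, and (ii) the nondegeneracy needed for the homotopy to contract in \emph{all} the required directions --- it is precisely here that one uses $D(g)(v)\in\Bbbk\left<1\right>\oplus V$ with no higher symmetric part, since a quadratic or higher term would produce variable-coefficient operators for which the clean antiderivative homotopy, and with it the equality of the two homologies, breaks down. Granting these, $\iota$ is an isomorphism on every ${}'E^2$-term and hence on abutments, which is the asserted $HH_\ast(A^{\rm pol}_{\G{g}, D})\cong HH^{cont}_\ast(A_{\G{g}, D})$.
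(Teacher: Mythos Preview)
Your route diverges from the paper's. The paper does not compare spectral sequences term by term: it works directly with the full Hochschild complex $\CH_\ast\bigl(S(V)\rtimes U(\G{g})\bigr)$ and the filtration by total $S(V)$-degree $\leq m$. The hypothesis is invoked only to ensure that the distributive law, and hence the Hochschild differential, respects this filtration, so that the filtration pieces $\CH_\ast\bigl(S(V)\rtimes U(\G{g})\bigr)_{(m)}$ are honest subcomplexes. These pieces form simultaneously a projective system (with surjective transition maps, hence Mittag-Leffler, so the continuous homology of the completion is the inverse limit of the truncated homologies) and an injective system whose colimit is $\CH_\ast(A^{\rm pol}_{\G{g},D})$. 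The paper then appeals to Corollary~\ref{SpecialCase} to conclude that $HH_\ast(A^{\rm pol}_{\G{g},D})$ is bounded and finite-dimensional in each degree, so both systems stabilize at a common finite stage $N$ and the two limits coincide. There is no explicit homotopy, no dissection of the Chevalley--Eilenberg--Koszul differential, and no Eilenberg--Moore perturbation argument.

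Your proposal, by contrast, has a genuine gap precisely at the point you yourself flag as (ii). For the antidifferentiation homotopy to contract anything you need the constant parts $\lambda_g$ to be nonzero in enough directions; if every $\lambda_g=0$ --- which is perfectly compatible with $D(g)(v)\in\Bbbk\langle 1\rangle\oplus V$ --- your operators $\sum_i\lambda_g(v_i)\,\partial_{v_i}$ vanish identically and the promised reduction to a ``finite-dimensional piece'' never gets off the ground. You have conflated two independent conditions: \emph{affineness} of $D(g)$ (no quadratic or higher symmetric part --- this is exactly the hypothesis, and it is what makes your operators constant-coefficient) versus \emph{nondegeneracy of the constant part} (this is what your homotopy actually needs, and the hypothesis is silent about it). The paper's filtration-and-stabilization argument sidesteps this entirely, since preservation of the degree filtration holds whether or not the $\lambda_g$ vanish.
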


\begin{proof}
It follows at once from the hypothesis that, the distributive law between $\widehat{S(V)}$ and $U(\G{g})$ restricts to a
  degree preserving distributive law between $S(V)$ and $U(\G{g})$. As such, we have a subcomplex
\[
\CH_\ast \left( S(V)\rtimes U(\G{g})\right)_{(m)}\subseteq \CH_\ast \left( S(V)\rtimes U(\G{g})\right)
\]
of terms whose total degree in $S(V)$ is less than or equal to $m\in\B{N}$. This collection yields a projective system of complexes together with the natural epimorphisms
\begin{equation}\label{eq:projective-system}
  \CH_\ast \left( S(V)\rtimes U(\G{g})\right)_{(m+1)} \to \CH_\ast \left( S(V)\rtimes U(\G{g})\right)_{(m)}
\end{equation}
The system satisfies the Mittag-Leffler condition~\cite{Dimi04,Emma96} by definition.  Then we conclude from
\cite[Prop. 2 and Thm. 5]{Emma96} that
\begin{equation}\label{inv-lim}
  HH^{cont}_\ast(A_{\G{g}, D})
  \cong \lim_{\underset{m}{\longleftarrow}}  HH_\ast \left(S(V)\rtimes U(\G{g})\right)_{(m)}
\end{equation}
where the left hand side is the continuous Hochschild homology of $A_{\G{g}, D}$.

On the other hand, since the distributive law is degree preserving, the collection
$\CH_\ast \left( S(V)\rtimes U(\G{g})\right)_{(m)}$ forms also an injective system of complexes via the natural embeddings
\begin{equation}\label{eq:injective-system}
   \CH_\ast \left( S(V)\rtimes U(\G{g})\right)_{(m)} \to \CH_\ast \left( S(V)\rtimes U(\G{g})\right)_{(m+1)},
\end{equation}
which results in
\[
\CH_\ast \left( S(V)\rtimes U(\G{g})\right)
= \lim_{\underset{m}{\longrightarrow}} \CH_\ast \left( S(V)\rtimes U(\G{g})\right)_{(m)} .
\]
Since the homology $HH_\ast \left( S(V)\rtimes U(\G{g})\right)$ is bounded and finite dimensional at every degree, by Corollary~\ref{SpecialCase}, we see that there is an index $N \in \B{N}$ such that for every $m\geq N$ the natural injections in \eqref{eq:injective-system}, and therefore the natural
projections in \eqref{eq:projective-system}, induce quasi-isomorphisms.  In other words,
\[
  HH_\ast \Big( S(V)\rtimes U(\G{g})\Big) \cong HH^{cont}_\ast \Big( \widehat{S(V)}\rtimes U(\G{g})\Big)
\]
as we wanted to show.
\end{proof}

In particular, for the Weyl algebra $A^{\rm pol}_{2n}$, and its completion $A_{2n}$, in view of the fact that the Hochschild homology of $A^{\rm pol}_{2n}$ is known, see for
example~\cite[Sect. 5.10]{Kassel:CohomologyOfAssociativeAlgebras}, we get
\cite[Thm. 2.1]{FeigFeldShoi05} and \cite[Thm. 4]{FeigTsyg83} in
\begin{equation}
  HH^{cont}_m(A_{2n}) \cong HH_m(A^{\rm pol}_{2n}) = 
  \begin{cases}
    k & \text{ if } m= 2n\\
    0 & \text{ otherwise}
  \end{cases}
\end{equation}
for any $n\geq 1$ and any $m\geq 0$, as an immediate corollary to Proposition~\ref{Completion}.

\subsection{Quantum Weyl algebras}\label{subsect-quant-Weyl}~

In this subsection we are going to discuss the Hochschild homology of quantum Weyl algebras
$S(X_\mu,Y_\nu,\Lambda)$ generated by $X_\mu:=\{x_1,\ldots,x_\mu\}$ and
$Y_\nu:=\{y_1,\ldots,y_\nu\}$, subject to the relations
\begin{align}\label{comm-rel}
  x_ix_j - q_{i,j}x_jx_i, \quad
  y_iy_j - q_{i,j}y_jy_i, \quad
  y_jx_i - q_{i,j}x_iy_j, \quad
  x_iy_i - y_ix_i + 1
\end{align}
for all $i\neq j$ where $1\leq i\leq \mu$, $1\leq j\leq \nu$, and $\Lambda:=(q_{i,j})$ of
\eqref{QuantumPlane2}.  Since there is a an isomorphism of algebras of the form
$S(X_\mu,Y_\nu,\Lambda) \to S(X_\nu,Y_\mu,\Lambda^t)$ given by
\begin{align*} & x_i\mapsto - y_i, \qquad y_j\mapsto x_j,
\end{align*}
we may assume, without loss of generality, that $\nu\geq\mu$.  We also assume that $\Lambda$
generates a free abelian group of rank $\nu(\nu-1)/2$ in $k^\times$ as
in~\cite{Richard:QuantumWeyl}.  For simplicity, from now on we are going to drop the parameter
matrix $\Lambda$ from the notation.

\subsubsection{The case $\mu=\nu$}~

Let $\nu=\mu$, and let $A_2(x_i,y_i)$ be the copy of the ordinary Weyl algebra in
$S(X_\mu,Y_\mu)$ generated by $x_i$ and $y_i$ for $1\leq i,j\leq \mu$.  We recall also the $A_2(x_1,y_1)^e$-resolution
\[ 0 \to A_2(x_1,y_1)^e \xra{\p_2} A_2(x_1,y_1)^e\oplus A_2(x_1,y_1)^e\xra{\p_1} A_2(x_1,y_1)^e\to 0, \]
where
\[ \partial_2(a\otimes b) = (ax_1\otimes b-a\otimes x_1 b)e_1 - (ay_1\otimes b - a\otimes y_1 b)e_2, \]
and
\[ \partial_1((a_1\otimes b_1)e_1 + (a_2\otimes b_2)e_2) 
  = a_1y_1\otimes b_1 - a_1\otimes y_1 b_1 + a_2x_1\otimes b_2 - a_2\otimes x_1 b_2,
\]
 for $A_2(x_1,y_1)$, \cite[Sect. 5.10]{Kassel:CohomologyOfAssociativeAlgebras}. Since $S(X_\mu,Y_\mu) \cong S(X_{\mu-1},Y_{\mu-1}) \# A_2(x_1,y_1)$, in view of
Theorem~\ref{thm:HochschildOfSmashProducts}, we have
\[ HH_*(S(X_\mu,Y_\mu)) \Leftarrow {}'E^1_{p,q} =
  H_p\Big(A_2(x_1,y_1),\CH_q\big(S(X_{\mu-1},Y_{\mu-1}),S(X_\mu,Y_\mu)\big)\Big).
\]
Now we note that
\begin{align*}
 &  \left[y_1 , x_2^{a_2}y_2^{b_2}\odots x_\mu^{a_\mu}y_\mu^{b_\mu} \ot
  x_1^ay_1^b \right]  \\
 & =   (q_{1,2}^{B_2-A_2}\cdots q_{1,\mu}^{B_\mu-A_\mu}-1)
        x_2^{a_2}y_2^{b_2}\odots x_\mu^{a_\mu}y_\mu^{b_\mu} \ot x_1^a y_1^{b+1}\\
    & \quad + aq_{1,2}^{B_2-A_2}\cdots q_{1,\mu}^{B_\mu-A_\mu} x_2^{a_2}y_2^{b_2}\odots x_\mu^{a_\mu}y_\mu^{b_\mu} \ot x_1^{a-1}y_1^b,
\end{align*}
and that
\begin{align*}
   & \left[x_1, x_2^{a_2}y_2^{b_2}\odots x_\mu^{a_\mu}y_\mu^{b_\mu}
  \ot x_1^ay_1^b\right]\\
  & = (q_{1,2}^{A_2-B_2}\cdots q_{1,\mu}^{A_\mu-B_\mu}-1)
        x_2^{a_2}y_2^{b_2}\odots x_\mu^{a_\mu}y_\mu^{b_\mu} \ot x_1^{a+1} y_1^b\\
    &\quad + bq_{1,2}^{A_2-B_2}\cdots q_{1,\mu}^{A_\mu-B_\mu}x_2^{a_2}y_2^{b_2}\odots x_\mu^{a_\mu}y_\mu^{b_\mu} \ot x_1^ay_1^{b-1},
\end{align*}
where $A_i$ and $B_j$, $2\leq i,j \leq \mu$, denote the total degree of $x_i$ and $y_j$ respectively. Hence, upon tensoring the above resolution with the complex $\CH_*(S(X_{\mu-1},Y_{\mu-1}), S(X_\mu,Y_\mu))$ we see that
\[ {}'E^1_{p,q} \cong
  \begin{cases}
    \CH_q(S(X_{\mu-1},Y_{\mu-1}), S(X_\mu,Y_\mu))^{A_2(x_1,y_1)} & \text{ if } p=2,\\
    0 & \text{ otherwise.}
  \end{cases}
\]
The invariant subcomplex consists of the tensors whose total degree of $x_i$ terms is
equal to the total degree of $y_i$ terms for each $2 \leq i \leq \mu$, with no $x_1$ or $y_1$ terms. Moreover, this subcomplex is a direct summand (as differential graded modules) of the
Hochschild complex $\CH_q(S(X_\mu,Y_\mu))$, as such, we see that
\begin{align*}
  {}'E^2_{2,q} \cong H_q\big(\CH_\ast(S(X_{\mu-1},Y_{\mu-1}), S(X_\mu,Y_\mu))^{A_2(x_1,y_1)}\big) = 
     HH_q\left(S(X_{\mu-1},Y_{\mu-1})\right)^{A_2(x_1,y_1)} .
\end{align*}
In other words,
\[ 
HH_n(S(X_\mu,Y_\mu)) = HH_{n-2}\left(S(X_{\mu-1},Y_{\mu-1})\right)^{A_2(x_1,y_1)}. 
\]
Reducing recursively then, we obtain
\begin{equation}
HH_n(S(X_\mu,Y_\mu)) \cong \begin{cases}
k & \text{ if } n = 2\mu, \\
0 & \text{ otherwise}.
\end{cases}
\end{equation}

\subsubsection{The case $\nu>\mu$}~

Considering
\[  S(X_\mu,Y_\nu) \cong S(X_\mu,Y_{\nu-1}) \#_R k[y_\nu], \]
Theorem \ref{SmoothExtensions} yields
\begin{align*}
  HH_n\big(S(X_\mu, & Y_\nu)\big)\\
  \cong & H_n\Bigg(\CH_\ast\bigg(S(X_\mu, Y_{\nu-1}),  S(X_\mu,Y_\nu)\bigg)_{k[y_\nu]}\Bigg) \oplus H_{n-1}\Bigg(\CH_\ast\bigg(S(X_\mu, Y_{\nu-1}),  S(X_\mu,Y_\nu)\bigg)^{k[y_\nu]}\Bigg).
\end{align*}
It is evident from the commutation relations \eqref{comm-rel} that
\begin{align*}
 \CH_\ast \bigg(S(X_\mu, & Y_{\nu-1}),  S(X_\mu,Y_\nu)\bigg)_{k[y_\nu]}\\
  \cong & \CH_\ast\bigg(S(X_\mu, Y_{\nu-1}),  S(X_\mu,Y_{\nu-1})\bigg) 
          \oplus \CH_\ast\bigg(k, k[y_\nu]/k\bigg).
\end{align*}
and that the $k[y_\nu]$-invariant subcomplex $\CH_\ast\big(S(X_\mu, Y_{\nu-1}),  S(X_\mu,Y_\nu)\big)^{k[y_\nu]}$
may be written as a direct sum of $\CH_\ast\big(k,   k[y_\nu]\big)$
and a complex with the same total degree of $x_i$'s and $y_i$'s, for each $1 \leq i \leq \mu$. However, since these complexes are accompanied with an extra $\ot y_{\nu}$, the reduction performed in the previous subsection reveals that the homology (on the next page) of the latter is trivial. As a result,
\begin{align*}
  HH_n(S(X_\mu,Y_\nu))
  \cong & HH_n(S(X_\mu,Y_{\nu-1})) \oplus H_n\Big(\CH_\ast\big(k, k[y_\nu]/k\big)\Big)\\
  & \oplus H_{n-1}\Big(\CH_\ast\big(k,   k[y_\nu]\ot y_\nu\big)\Big).
\end{align*}
Hence, inductively, we see that
\[
  HH_n(S(X_\mu,Y_\nu))
  \cong \begin{cases}
    \bigoplus_{j=\mu+1}^\nu\,y_jk[y_j]
    & \text{ if }  n=0, \\
    \bigoplus_{j=\mu+1}^\nu\,k[y_j] \ot y_j
    & \text{ if }  n=1, \\
    k & \text{ if }  n= 2\mu, \\
    0 & \text{ otherwise},
  \end{cases}
\]
as in \cite[Thm. 4.4.1]{Richard:QuantumWeyl}.

\subsection{The algebra of quantum matrices}\label{subsect-quant-matrix}~

Given $q \neq 1$, the algebra $M_q(2)$ of quantum matrices is defined in \cite[Def. IV.3.2]{Kassel-book} as the quotient algebra generated by $\{a,b,c,d\}$ subject to the relations
\begin{align*}
ba & = qab, &       db & = q bd, \\
ca & = qac, &       dc & = qcd, \\
bc & = cb,  &  ad - da & = (q^{-1} - q)bc.
\end{align*}
We note also that the center of $M_q(2)$ is generated by the quantum determinant $D_q := ad - q^{-1}bc \in M_q(2)$, see \cite[Thm. 1.6]{NoumYamaMima93}).

More importantly, $M_q(2)$ is a tower of Ore extensions $A_1\subseteq A_2\subseteq A_3\subseteq M_q(2)$ given by
\[ A_1 = k[a], \quad A_2 \cong A_1[b,\a_1,0], \qquad A_3 \cong A_2[c,\a_2,0], \qquad M_q(2) \cong A_3[d,\a_3,\d], 
\]
see, for instance, \cite[Sect. IV.4]{Kassel-book}. The structure morphisms $\alpha_i\colon A_i\to A_i$ are given as
\begin{align*}
\a_1(a)       & := qa, \\
\a_2(a)       & := qa, & \a_2(b) &:= b, \\
\a_3(a)       & := a,  & \a_3(b) &:= qb, & \a_3(c) &:= qc.
\end{align*}
for $i=1,2,3$, and $\delta\colon A_3\to A_3$ is given as
\begin{align*}
\d(b^jc^k)    & := 0 & 
\d(a^ib^jc^k) & := (q - q^{-1})\frac{1-q^{2i}}{1-q^2}a^{i-1}b^{j+1}c^{k+1}.
\end{align*}
Let us also recall from \cite[Lemma 1]{GeLiuSun92} the relations
\begin{equation}\label{eqn-ad-da-commutation}
d^na = ad^n - q\left(1-q^{-2n}\right)bcd^{n-1}, \qquad da^n = a^nd  + q\left(q^{-2n}-1\right)a^{n-1}bc.
\end{equation}
It follows from Theorem~\ref{thm:HochschildOfSmashProducts} that
\[
HH_\ast(M_q(2)) \Leftarrow E^1_{i,j} := H_j(A_3,\CH_i(k[d],M_q(2))),
\]
where, setting $U_i:= \CH_i(k[d],M_q(2))$, the same results yields also
\[
  H_\ast(A_3, U_i) \Leftarrow {}'E^1_{m,n} := H_m(k[c],\CH_n(A_2,U_i))
  = \begin{cases}
    \CH_n(A_2,U_i)_{k[c]} & \text{ if } \,\, m = 0, \\
    \CH_n(A_2,U_i)^{k[c]} \ot c & \text{ if } \,\, m = 1, \\
    0 & \text{ if } \,\, m \geq 2.
    \end{cases}
\]
On the next step we note that
\[
\Big[k[c],M_q(2)\Big] = \Big\langle a^rb^sc^td^\ell \mid r\neq \ell,\ t>0 \Big\rangle,
\]
and hence,
\[
\frac{M_q(2)}{\Big[k[c],M_q(2)\Big]} := \Big\langle a^rb^sd^\ell, a^\ell b^s c^td^\ell \mid r,s,\ell\geq 0, \quad t>0 \Big\rangle.
\]
Following the ideas of Proposition \ref{prop:diagonal} we may identify the ($A_2$, or vertical) homology of the complex 
\begin{align*}
  \CH_n & (A_2,U_i)_{k[c]}\\
  \cong & \Big\langle a^{r_1}b^{s_1}\odots a^{r_n}b^{s_n} \ot a^{r_{n+1}}b^{s_{n+1}} d^{\ell_1} \ot d^{\ell_2} \odots d^{\ell_{i+1}} \mid r_u,s_v,\ell_w\geq 0 \Big\rangle \\
  & \oplus \Big\langle a^{r_1}b^{s_1}\odots a^{r_n}b^{s_n} \ot a^{r_{n+1}}b^{s_{n+1}}c^td^{\ell_1} \ot d^{\ell_2} \odots d^{\ell_{i+1}} \mid \\
  & \qquad r_u,s_v,\ell_w\geq 0, t > 0, \sum_u r_u = \sum_w \ell_w \Big\rangle
\end{align*}
with the total homology of the bicomplex
\begin{align*}
\CH_{\a,\b} & (A_2,U_i)_{k[c]} \\
  \cong & \Big\langle a^{r_1}\odots a^{r_\b}\ot a^{r_{\b+1}} b^s d^{\ell_1} \ot d^{\ell_2} \odots d^{\ell_{i+1}} \ot b^{s_1}\odots b^{s_\a} \mid r_u,s,\ell_w\geq 0 \Big\rangle \\
  & \oplus \Big\langle a^{r_1}\odots a^{r_\b}\ot a^{r_{\b+1}} b^sc^td^{\ell_1} \ot d^{\ell_2} \odots d^{\ell_{i+1}} \ot b^{s_1}\odots b^{s_\a} \mid \\
  & \qquad r_u,s,t,\ell_w\geq 0, t>0, \sum_u r_u = \sum_w \ell_w\Big\rangle.
\end{align*}
The homology of this bicomplex, in turn, is approximated by the spectral sequence
\begin{align}\label{eqn-homology-E1-coinv}
  H_\ast & \left(\CH_\ast(A_2,U_i)_{k[c]} \right) \nonumber
  = H_\ast\left(\CH_{\ast,\ast}(A_2,U_i)_{k[c]} \right)\\
  \Leftarrow {}'E^1_{\a,\b} & 
  \cong \begin{cases}
    \Big\langle a^{r_1}\odots a^{r_\b}\ot a^rb^sc^td^\ell \ot d^{\ell_1} \odots d^{\ell_i} \mid \\
    \qquad r,r_1,\ldots r_\b,s,t,\ell,\ell_1,\ldots,\ell_i\geq 0,\quad r_1 + \ldots + r_\b + r = \ell_1 + \ldots + \ell_i +\ell\Big\rangle\\
    \Big\langle a^{r_1}\odots a^{r_\b}\ot a^rd^\ell \ot d^{\ell_1} \odots d^{\ell_i}  \mid  r,r_1,\ldots r_\b,s_1,\ldots,s_\a,\ell,\ell_1,\ldots,\ell_i\geq 0\big\}\Big\rangle & \text{ if } \a= 0, \\
    \Big\langle a^{r_1}\odots a^{r_\b}\ot a^rb^sc^td^\ell \ot d^{\ell_1} \odots d^{\ell_i} \mid \\
    \qquad r,r_1,\ldots r_\b,s,t,\ell,\ell_1,\ldots,\ell_i\geq 0,\quad r_1 + \ldots + r_\b + r = \ell_1 + \ldots + \ell_i +\ell \Big\rangle \ot b
    & \text{ if } \a= 1, \\
    0 & \text{ if } \a\geq 2.
\end{cases}
\end{align}
Noticing, in view of \ref{eqn-ad-da-commutation}, that
\[
\Big[a, a^{\ell-1}b^sc^td^\ell\Big] = (1-q^{s+t})a^\ell b^sc^td^\ell - q(1-q^{-2\ell}) a^{\ell-1}b^{s+1}c^{t+1}d^{\ell-1},
\]
in the vertical homology we arrive at
\begin{align}\label{eqn-homology-E2-coinv}
  H_\ast & \left(\CH_\ast(A_2,U_i)_{k[c]} \right) \nonumber\\
  \Leftarrow & {}'E^2_{\a,\b}
  \cong \begin{cases}
    \CH_i(k,k[b,c]) \oplus \CH_i(k[d],k[a,d]) & \text{ if } \a= 0, \b=0, \\
    a \ot \CH_i(k,k[a])   & \text{ if } \a= 0,\ \b=1\\
    \CH_i(k,k[b,c])\ot b  & \text{ if } \a= 1,\ \b=0, \\
    0 & \text{ otherwise. }%  \a= 1,\ \b=1, \\
    % 0 & \text{ if } \a\geq 2.
\end{cases}
\end{align}
Now, on the other hand,
\[
M_q(2)^{k[c]} \cong \Big\langle \big\{  a^rb^s c^td^r \mid r,s,t\geq 0\big\}\Big\rangle,
\]
as such,
\begin{align*}
  \CH_n(A_2,U_i)^{k[c]}
  \cong & \Big\langle  a^{r_1}b^{s_1}\odots a^{r_n}b^{s_n} \ot a^rb^sc^td^\ell \ot d^{\ell_1} \odots d^{\ell_i} \mid \\
        &  \qquad r,r_1,\ldots r_n,s,s_1,\ldots,s_n,t,\ell,\ell_1,\ldots,\ell_i\geq 0, \quad r_1 + \ldots + r_n + r = \ell_1 + \ldots + \ell_i +\ell\Big\rangle.
\end{align*}
Similarly above, we may identify the ($A_2$, or vertical) homology of this complex with the total homology of the bicomplex
\begin{align*}
  \CH_{\mu,\nu}(A_2,U_i)^{k[c]}
  \cong & \Big\langle a^{r_1}\odots a^{r_\nu}\ot a^rb^sc^td^\ell \ot d^{\ell_1} \odots d^{\ell_i} \ot b^{s_1}\odots b^{s_\mu} \mid \\
        & \qquad r,r_1,\ldots r_\nu,s,s_1,\ldots,s_\mu,t,\ell,\ell_1,\ldots,\ell_i\geq 0, \quad r_1 + \ldots + r_\nu + r = \ell_1 + \ldots + \ell_i +\ell\Big\rangle.
\end{align*}
Now, following the same line of thought in \eqref{eqn-homology-E1-coinv} and \eqref{eqn-homology-E2-coinv}, we obtain
\begin{align*}
  H_\ast \left(\CH_\ast(A_2,U_i)^{k[c]} \ot c\right)  \Leftarrow & {}'E^2_{\a,\b} \\
  \cong & 
    \begin{cases}
      \CH_i(k,k[b,c])\ot c & \text{ if }  \a= 0, \ \b=0, \\
      \CH_i(k,k[b,c]) \ot b\ot c & \text{ if }  \a= 1,\ \b=0, \\
      0 & \text{ otherwise. } %  \a= 0, \,\, \b=1\\
      % 0 & \text{ if }  \a= 1,\,\,\b=1, \\
      % 0 & \text{ if }  \a\geq 2.
    \end{cases}
\end{align*}
We thus conclude,
\begin{align*}
  H_\ast(A_3, U_i)
  \Leftarrow & {}'E^2_{m,n}
              \cong \begin{cases}
                 \CH_i(k,k[b,c]) \oplus \CH_i(k[d],k[a,d])
                 & \text{ if } m=0\text{ and } n= 0, \\
                 a \ot \CH_i(k,k[a]) \oplus \CH_i(k,k[b,c]) \ot b
                 & \text{ if } m=0\text{ and } n= 1, \\
                 \CH_i(k,k[b,c]) \ot c
                 & \text{ if } m=1\text{ and } n= 0, \\
                 \CH_i(k,k[b,c]) \ot b \ot c
                 & \text{ if } m=1\text{ and } n= 1, \\
                 0 & \text{ otherwise.}
               \end{cases}
\end{align*}
As such,
\begin{align*}
  HH_\ast(M_q(2))
  \Leftarrow & E^1_{i,j} \\
  := & H_j(A_3,\CH_i(k[d],M_q(2))) \\
  \cong & \begin{cases}
    \CH_i(k,k[b,c]) \oplus \CH_i(k[d],k[a,d])
    & \text{ if } j = 0, \\
    a \ot \CH_i(k,k[a]) \oplus \CH_i(k,k[b,c]) \ot b \oplus \CH_i(k,k[b,c]) \ot c
    & \text{ if } j = 1, \\
    \CH_i(k,k[b,c]) \ot b \ot c
    & \text{ if } j = 2, \\
    0 & \text{ if } j \geq 3,
  \end{cases}
\end{align*}
and
\begin{align*}
  HH_\ast(M_q(2)) \Leftarrow & E^2_{i,j} \\
  \cong & \begin{cases}
    k[b,c] \oplus k[a,d]
    & \text{ if } j = 0 \text{ and } i =0, \\
    k[d] \ot d
    & \text{ if } j = 0 \text{ and } i =1, \\
    a \ot k[a] \oplus k[b,c] \ot b \oplus k[b,c] \ot c
    & \text{ if } j = 1 \text{ and } i =0, \\
    k[b,c]\ot b \ot c
    & \text{ if } j = 2 \text{ and } i =0, \\
    0 & \text{ if } j \geq 3 \text{ or } i\geq 1.
  \end{cases}
\end{align*}
In other words, we obtain
\[
  HH_n(M_q(2)) \cong
  \begin{cases}
    k[b,c] \oplus k[a,d]
    & \text{ if } n =0, \\
    k[a] \oplus k[b,c] \oplus k[b,c] \oplus k[d]
    & \text{ if } n =1, \\ 
    k[b,c]
    & \text{ if } n =2, \\
    0 & \text{ if } n \geq 3.
  \end{cases}
\]

\begin{remark}
  We note that the result above does not follow \cite[Coroll. 2.5]{GuccGucc97} since the
  extension $M_q(2)=A_3[d,\a_3,\d]$ does not satisfy the hypothesis therein.
\end{remark}

\bibliographystyle{plain}
\bibliography{references}{}

\end{document}